\documentclass[A4paper,11pt,english]{article}
\usepackage{babel}
\usepackage{graphicx}
\usepackage{amsmath}
\usepackage[utf8]{inputenc}
\usepackage{amsfonts}
\usepackage{amssymb}
\usepackage{amsmath}
\usepackage{amsthm}
\usepackage{xcolor}
\usepackage{array}
\usepackage{cite}
\usepackage{hyperref}
\title{\bf Saturation and Localization Results for Max-product Generalized Sampling Operators based on Centered Bell-shaped Kernels}
\author{ {\bf Lorenzo Boccali$^{1,2}$} \hskip1cm {\bf Gianluca Vinti$^{1}$} \\ 
	$^{1}$Department of Mathematics and Computer Science \\
	University of Perugia\\
	1, Via Vanvitelli, 06123 Perugia, Italy \\ 
	$^{2}$Department of Mathematics and Computer Science ``Ulisse Dini" \\ University of Florence \\ 67/a, Viale Morgagni, 50134 Florence, Italy \\
	{\small {\tt lorenzo.boccali@unifi.it}} -  {\small {\tt gianluca.vinti@unipg.it}} }
\date{}

\newtheorem{definizione}{Definition}[section]
\newtheorem{lemma}{Lemma}[section]
\newtheorem{teorema}{Theorem}[section]
\newtheorem{cor}{Corollary}[section]
\theoremstyle{definition}
\newtheorem{remark}{Remark}[section]

\newcommand{\R}{\mathbb{R}}

\newcommand{\assolutol}{\lvert}
\newcommand{\assolutor}{\rvert}

\begin{document}
	\maketitle
	\begin{abstract}
	In this paper, we establish the saturation order and a local inverse result for the uniform approximation of non-negative, bounded, and uniformly continuous functions on $\mathbb{R}$ by max-product generalized sampling operators based on suitable kernel functions. In particular, assuming that the kernel is an even centered bell-shaped function, we first show that $1/w$, $w>0$, is the uniform saturation order, with the corresponding saturation class coinciding with the class of non-negative constant functions. This means that $1/w$ is the best possible rate of convergence that the max-product generalized sampling operators can achieve when approximating non-trivial (i.e., non-constant) non-negative, bounded, and uniformly continuous functions on $\mathbb{R}$. Moreover, it is known that, for Lipschitz continuous functions on $\mathbb{R}$, the approximation order is $1/w$ as $w \to +\infty$. Here, we show that this result can be locally reversed. Specifically, we prove that if $f$ can be approximated at the rate $1/w$ on a compact interval $[a,b]\subset\mathbb{R}$, then $f$ is Lipschitz continuous on $[a,c]$ for every $c \in [a,b)$ whenever $0<a<b$, and on $[c,b]$ for every $c \in (a,b]$ whenever $ a<b<0$. Finally, under the same assumptions on the kernel, we establish a strong localization result for sequences of truncated max-product generalized sampling operators in the case of strictly positive and bounded functions defined on $[0,1]$.  All these results extend previous results of Coroianu and Gal, which were established only for specific sinc-type kernels, to a broader class of kernel functions.   
	\end{abstract}

\medskip\noindent
{\small {\bf AMS subject classification:} 41A25, 41A05   \newline
	{\small {\bf Key Words:} Centered bell-shaped functions, max-product generalized sampling operators, saturation order, local inverse result, localization result

\section{Introduction}
Within the framework of Approximation Theory, determining the saturation order and establishing inverse results for a given approximation process are among the most interesting and challenging problems (see, e.g., \cite{May1976,Ivanov1983,CoroianuGal2012,CostarelliVinti2019,CostarelliVinti2019ter,Cantarini2020,CantariniCoroianu2021}). \newline In general, investigating the saturation order of a family of operators $(L_{w})_{w>0}$ amounts to determining a class of functions $\mathcal{D}$, a certain subclass $\mathcal{E} \subset \mathcal{D}$ consisting of trivial functions, and a positive non-increasing function $\varphi(w)$, $w>0$, such that there exists a function $g \in \mathcal{D} \setminus \mathcal{E}$ with $\|L_{w}(g)-g\|=\mathcal{O}(\varphi(w))$ as $w\rightarrow+\infty$, and with the property that:
\begin{description}
\item[(I)] for any $f \in \mathcal{D}$ and $\| L_{w}(f)-f \|=o(\varphi(w))$ as $w \rightarrow +\infty$, then $f \in \mathcal{E}$. 
\end{description}
Here, $\| \cdot \|$ denotes any norm defined on $\mathcal{D}$. The function $\varphi(w)$ is called the \textit{saturation order} of the approximation process $(L_{w})_{w>0}$; it represents the best possible rate of approximation that the operators $L_{w}$ can achieve when approximating functions belonging to $\mathcal{D}$.  \newline On the other hand, it is also of interest to establish, whenever possible, an inverse result for $L_{w}$, which consists in proving that 
\begin{description}
\item[(II)] if $\| L_{w}(f)-f \| = \mathcal{O}(\varphi(w))$ as $w \rightarrow + \infty$, then $f$ belongs to a non-trivial subclass of $\mathcal{D}$. 
\end{description} 
In the present paper, we face these problems for the following family of max-product generalized sampling operators: 
\begin{equation*}
S_{w}^{\chi}(f)(x):=\frac{\displaystyle \bigvee_{k \in \mathbb{Z}}f\left(\frac{k}{w}\right)\chi(wx-k)}{\displaystyle \bigvee_{k \in \mathbb{Z}} \chi(wx-k)}, \quad x \in \mathbb{R}, \ w>0, \quad \quad \quad \quad (\diamondsuit)
\end{equation*} 
where $f:\mathbb{R} \rightarrow \mathbb{R}$ is any bounded function, $\chi:\mathbb{R} \rightarrow \mathbb{R}$ is a general kernel function satisfying suitable assumptions, and the symbol $\bigvee$ (as is common in the literature, see, e.g., \cite{Anastassiou2018,Gungor2018,Holhos2018,Bajpeyi2024,Bede2009}) denotes the supremum operator, i.e., the supremum of the terms $f\left(\frac{k}{w}\right)\chi(wx-k)$ and $\chi(wx-k)$, respectively, for $k \in \mathbb{Z}$, $w>0$, and $x \in \mathbb{R}$. \newline  The non-linear (more precisely, sub-linear) operators in ($\diamondsuit$), originally introduced as a sequence of operators by Coroianu et al. \cite{CoroianuCostarelli2019} for the uniform approximation of non-negative, bounded, and uniformly continuous functions on $\mathbb{R}$, represent the max-product counterparts of the well-known generalized sampling series introduced by the German mathematician P.L. Butzer (see, e.g., \cite{Butzer1987,ButzerStens1993,Bardaro2010,Riesz1984}), obtained by replacing the series with the symbol $\bigvee$. This process of non-linear generalization, applied by Bede, Coroianu, and Gal to several families of linear approximation operators in their  comprehensive monograph \cite{Bede2016}, offers a twofold advantage in the case of generalized sampling operators. On the one hand, max-product generalized sampling operators provide a faster order of uniform approximation than their linear counterparts. On the other hand, they can be constructed using a broader class of kernel functions than classical (linear) generalized sampling operators. Indeed, in the max-product setting, convergence results can be obtained without requiring the kernel $\chi$ to satisfy the classical assumptions of discrete approximate identities, which are instead essential in Butzer's theory (see, e.g., \cite{Bardaro2010,Butzer1987,Bardaro2013}). \newline Based on the above motivations, sequences of max-product generalized sampling operators have been extensively investigated in the classical continuous setting, leading to pointwise and uniform convergence results, together with a Jackson-type estimate in terms of the uniform modulus of continuity in the case of non-negative, bounded, and uniformly continuous functions on $\mathbb{R}$.
\newline However, the study of the saturation order and inverse results with respect to the uniform norm remains an open problem. A key step towards successfully addressing these questions lies in the capability to express the denominator on the right-hand side of $(\diamondsuit)$ in a suitable form (see Lemma \ref{lemma1}). As one might expect, this property strictly depends on the choice of the kernel function $\chi$. Unfortunately, not all kernels associated with the operators $S_{w}^{\chi}$ possess this property. Therefore, we are compelled to restrict our analysis to a specific class of kernel functions. More precisely, we assume that the function $\chi$ appearing in the definition of ($\diamondsuit$) is an even centered bell-shaped function, according to the notion introduced by Cardaliaguet and Euvrard in \cite{Cardaliaguet1992}.  \newline In this paper, under suitable assumptions on the kernel $\chi$, we are able to solve problem \textbf{(I)} for the above operators with $\mathcal{D}$ coinciding with the space of non-negative, bounded, and uniformly continuous functions defined on the whole real axis and endowed with the usual sup-norm, $\varphi(w)=\frac{1}{w}$, $w>0$, and with the trivial class $\mathcal{E}$ consisting of non-negative constant functions. \newline However, for what concerns question \textbf{(II)}, we are only able to establish a local inverse approximation result. More precisely, if $f:\mathbb{R}\to \mathbb{R}_{0}^{+}$ is a bounded and uniformly continuous function such that: 
\begin{equation*}
\sup_{x \in [a,b]} \lvert S_{w}^{\chi}(f)(x)-f(x) \rvert = \mathcal{O}\left(\frac{1}{w}\right) \quad \text{as} \ w \rightarrow +\infty,
\end{equation*}
then $f$ is Lipschitz continuous on $[a,c]$ for every $c \in [a,b)$ whenever $0<a<b$, and on $[c,b]$ for every $c \in (a,b]$ whenever $a<b<0$. This result turns out to be consistent with the direct approximation theorem concerning the problem of the rate of convergence, already solved for sequences of max-product generalized sampling operators in \cite{CoroianuCostarelli2019}. Indeed, it has been proved that the order of approximation in the space of (non-negative) Lipschitz functions on $\mathbb{R}$ is $\frac{1}{n}$, as $n \rightarrow +\infty$.
\newline In the final section, in order to provide a comprehensive characterization of the approximation properties of the max-product generalized sampling operators, we turn our attention to localization results. In addition to offering a higher order of approximation than their linear counterparts, it is well-known that the max-product versions of several families of linear discrete operators generally exhibit strong localization properties (see, e.g., \cite{CoroianuGal2013,CoroianuGal2013bis,CoroianuGal2014bis}). More precisely, these non-linear operators provide a much better, possibly optimal, local approximation of the target function. Specifically, if two functions $f$ and $g$ coincide on a strict subinterval $I$ of their domain, then, for every subinterval $I'$ strictly contained in $I$, the max-product operators applied to $f$ and $g$, denoted by $L_{n}(f)$ and $L_{n}(g)$, respectively, coincide on $I'$ for all sufficiently large $n\in \mathbb{N}$. Within the framework of sampling operators, in \cite{CoroianuGal2012}, the authors established a strong localization result for sequences of truncated max-product sampling operators based solely on sinc-type kernels, such as the well-known sinc/Whittaker and Fejér kernels. Here, we show that, by still working with even centered bell-shaped kernel functions, it is possible to recover the same result for the following family of truncated max-product generalized sampling operators:
\begin{equation*}
S_{n}^{\chi}(f)(x)=\frac{\displaystyle\bigvee_{k=0}^{n}f\left(\frac{k}{n}\right)\chi(nx-k)}{\displaystyle \bigvee_{k=0}^{n}\chi(nx-k)}, \quad x \in [0,1], \ n \in \mathbb{N},
\end{equation*}
where $f:[0,1]\rightarrow \mathbb{R}^{+}$ is a strictly positive bounded function. In particular, we prove that if $f$ and $g$ are bounded functions with strictly positive lower bounds and coincide on a given subinterval $[a,b] \subset [0,1]$, then, for sufficiently large $n \in \mathbb{N}$,  the operators $S_{n}^{\chi}(f)$ and $S_{n}^{\chi}(g)$ coincide on every subinterval $[c,d]$ sufficiently close to $[a,b]$. As a direct consequence, if $f$ is a strictly positive function that is constant on an interval $[a,b] \subset [0,1]$, then, for sufficiently large $n \in \mathbb{N}$, $S_{n}^{\chi}(f)$ attains the same constant value on subintervals sufficiently close to $[a,b]$. This shows that the truncated max-product generalized sampling operators based on suitable kernels are particularly effective for the local reconstruction of strictly positive, non-smooth, but continuous functions that are constant on proper subintervals of $[0,1]$. This property may lead to interesting applications in signal processing and represents a clear advantage of the max-product version of generalized sampling operators over their linear counterparts, which do not exhibit this useful property. \newline The paper is organized as follows. In Section \ref{Sezione2}, we recall some notation and preliminary results concerning the max-product generalized sampling operators and present examples of specific kernels for which the theory developed in this paper applies. Section \ref{Sezione3} is devoted to the study of the uniform saturation order of the operators $S_{w}^{\chi}$ in ($\diamondsuit$), while in Section \ref{Sezione4}, a local inverse result is established.  Finally, in Section \ref{Sezione5}, we prove a strong localization result for the truncated version of the considered operators. 
\section{Preliminaries}
\label{Sezione2}
In this paper, we use the notation $f:I \rightarrow \mathbb{R}$ to denote real-valued functions defined on the compact interval $I=[a,b] \subset \mathbb{R}$ or $I=\mathbb{R}$. In accordance with this notation, let $C(I)$ denote the space of all continuous functions $f:[a,b] \rightarrow \mathbb{R}$, if $I=[a,b]$, and the space of all bounded and uniformly continuous functions $f:\mathbb{R}\to\mathbb{R}$, when $I=\mathbb{R}$. Moreover, by $C_{+}(I)$ we denote the subspace of $C(I)$ consisting of non-negative valued functions. On the space $C(I)$ we consider the usual sup-norm $\| \cdot \|_{\infty}$. Furthermore, by $\textnormal{Lip}(I)$ we denote the subspace of $C(I)$  consisting of Lipschitz continuous functions on $I$, i.e., the space of all functions for which there exists a positive constant $L$ such that
\begin{equation*}
	\lvert f(x)-f(y) \rvert \le L \lvert x-y \rvert,
\end{equation*} 
for every $x$, $y \in I$. 
\newline Consistently with a wide literature (see, e.g., \cite{Coroianu2010,Coroianu2011bis,Wu2025,Ellidokuz2025,Angamuthu2024}), we denote by $\bigvee$ the max symbol, defined as follows.
\begin{definizione}
	Let $(A_{k})_{k \in \mathbb{Z}}$ be a family of real numbers. We denote:
	\begin{equation}
	\label{equazione1}
		\bigvee_{k \in \mathcal{J}}A_{k}:=\sup\bigl\{A_{k}: k \in \mathcal{J}\bigr\},
	\end{equation}
	for any set of indices $\mathcal{J} \subseteq \mathbb{Z}$. Note that if $\mathcal{J}$ is finite, then the supremum on the right-hand side of (\ref{equazione1}) reduces to a maximum.
\end{definizione}
The supremum (or the maximum in the finite case) operation plays a crucial role in the definition of the non-linear operators studied in this paper. More precisely, we will consider max-product generalized sampling operators based on general kernel functions, namely bounded and measurable functions $\chi:\mathbb{R} \rightarrow \mathbb{R}$ satisfying the following suitable assumptions:
\begin{description}
\item[{($\chi$1)}] For a suitable $\beta>0$, there holds:
\begin{equation*}
	m_{\beta}(\chi):=\sup_{u \in \mathbb{R}}\bigvee_{k \in \mathbb{Z}}\lvert \chi(u-k) \rvert \cdot \lvert u-k \rvert^{\beta}=\sup_{y \in \mathbb{R}} \lvert\chi(y) \rvert \cdot \lvert y \rvert^{\beta}<+\infty,
\end{equation*}
i.e., the generalized absolute moment of order $\beta$ of $\chi$ is finite;
\item[($\chi$2)] For a suitable constant $a_{\chi}>0$, we have
\begin{equation*}
	\inf_{x \in \left[-\frac{1}{2},\frac{1}{2}\right]} \chi(x)=:a_{\chi}>0, \quad \textnormal{if} \ I=\mathbb{R},
\end{equation*}
or 
\begin{equation*}
	\inf_{x \in [-1,1]} \chi(x)=:a_{\chi}>0, \quad \textnormal{if} \ I=[a,b]\subset\mathbb{R};
\end{equation*}
\item[($\chi$3)] $\chi$ is continuous on $\mathbb{R}$, $\chi(x)\ge0$ for every $x \in \mathbb{R}$, and moreover $\lim_{\lvert x \rvert \rightarrow +\infty}\chi(x)=0$;
\item[($\chi$4)] $\chi$ is an even function;
\item[($\chi$5)] $\chi(x)$ is non-decreasing for $x <0$ and non-increasing for $x \ge 0$.
\end{description}
\begin{remark}
$(a)$ Note that, from conditions $(\chi4)$ and $(\chi5)$, it is easy to deduce that $\chi\left(\frac{1}{2}\right)=a_{\chi}>0$ if $I=\mathbb{R}$, and $\chi(1)=a_{\chi}>0$, when $I=[a,b]$, where $a_{\chi}$ is the constant defined in condition $(\chi2)$. 
\newline $(b)$ The function $\chi(x)$ is an even ``centered bell-shaped function", according to the definition given by Cardaliaguet and Euvrard in \cite{Cardaliaguet1992}.
\end{remark}
From now on, unless otherwise stated, we assume that the kernel function $\chi$ satisfies conditions $(\chi1)-(\chi5)$. We first recall the following well-known results concerning the kernel $\chi$.
\begin{lemma}\textnormal{(see Lemma 2.4 of \cite{CostarelliVinti2016})}
	If $\chi:\R \rightarrow \R$ is bounded and such that $\chi(x)=\mathcal{O}(\lvert x \rvert^{-\alpha})$ as $\assolutol x \assolutor \rightarrow +\infty$, for $\alpha >0$, then:
	\begin{equation*}
		m_{\beta}(\chi)<+\infty, \ for \ every \ \ 0 \le \beta \le \alpha.
	\end{equation*}
\end{lemma}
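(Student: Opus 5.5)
The plan is to control $\lvert\chi(u-k)\rvert\,\lvert u-k\rvert^{\beta}$ uniformly in $u\in\R$ and $k\in\mathbb{Z}$. Since these are values of $\lvert\chi(y)\rvert\,\lvert y\rvert^{\beta}$ at $y=u-k$, it suffices to show that $\sup_{y\in\R}\lvert\chi(y)\rvert\,\lvert y\rvert^{\beta}<+\infty$. This also justifies the equality of the two expressions for $m_{\beta}(\chi)$ in $(\chi1)$. For the inequality ``$\le$'', each term in the double supremum is a value of $\lvert\chi(y)\rvert\,\lvert y\rvert^{\beta}$. For ``$\ge$'', given $y$ we take $k=0$ and $u=y$.

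To bound $\sup_y \lvert\chi(y)\rvert\,\lvert y\rvert^{\beta}$ for a fixed $0\le\beta\le\alpha$, we split the real line into two regions using the asymptotic hypothesis.

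\emph{Far region.} By assumption there exist constants $C>0$ and $M\ge 1$ such that $\lvert\chi(y)\rvert\le C\lvert y\rvert^{-\alpha}$ for $\lvert y\rvert\ge M$. Hence, for $\lvert y\rvert\ge M$,
\begin{equation*}
\lvert\chi(y)\rvert\,\lvert y\rvert^{\beta}\le C\lvert y\rvert^{\beta-\alpha}\le C,
\end{equation*}
where the last step uses $\beta-\alpha\le 0$ and $\lvert y\rvert\ge 1$.

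\emph{Near region.} For $\lvert y\rvert<M$, boundedness of $\chi$ gives
\begin{equation*}
\lvert\chi(y)\rvert\,\lvert y\rvert^{\beta}\le \|\chi\|_{\infty}M^{\beta}.
\end{equation*}
For $\beta=0$ we use the convention $\lvert y\rvert^{0}=1$, and the same bound holds.

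Combining the two regions, $m_{\beta}(\chi)\le\max\{C,\|\chi\|_{\infty}M^{\beta}\}<+\infty$.

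There is no real obstacle here. The only point needing care is choosing $M\ge1$, so that $\lvert y\rvert^{\beta-\alpha}\le1$ holds on the far region. The upper restriction $\beta\le\alpha$ is exactly what this step requires.
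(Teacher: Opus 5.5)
Your proof is correct. It is the standard argument behind the cited Lemma 2.4 of Costarelli--Vinti; the present paper only quotes that result without proof. You split at $|y|=M\ge 1$, bound the near region by $\|\chi\|_{\infty}M^{\beta}$ and the far region by $C|y|^{\beta-\alpha}\le C$. Your reduction of the double supremum to $\sup_{y}|\chi(y)|\,|y|^{\beta}$ also correctly justifies the equality stated in $(\chi1)$.
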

\begin{lemma}\textnormal{(see Lemma 2.2 of \cite{CoroianuCostarelli2019})}
	\label{lemma2.1}
	Let $\chi:\mathbb{R} \rightarrow \mathbb{R}$ be a bounded function satisfying $(\chi1)$ with $\beta>0$. Then, there holds:
	\begin{equation*}
		m_{\upsilon}(\chi)<+\infty, \quad for \ every \ \ 0 \le \upsilon \le \beta.
	\end{equation*} 
	In particular, $m_{0}(\chi)=\|\chi\|_{\infty}$. 
\end{lemma}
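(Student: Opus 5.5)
We prove that $m_{\upsilon}(\chi)<+\infty$ for every $0\le\upsilon\le\beta$. The idea is to bound $\lvert\chi(y)\rvert\,\lvert y\rvert^{\upsilon}$ separately on $\lvert y\rvert\le 1$ and $\lvert y\rvert>1$, using only boundedness of $\chi$ and $(\chi1)$.

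First, the identity in $(\chi1)$ already shows that $m_{\upsilon}(\chi)=\sup_{y\in\mathbb{R}}\lvert\chi(y)\rvert\,\lvert y\rvert^{\upsilon}$. Indeed, as $u$ ranges over $\mathbb{R}$ and $k$ over $\mathbb{Z}$, the quantity $y=u-k$ ranges over all of $\mathbb{R}$. So it suffices to bound this single supremum.

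Fix $0<\upsilon\le\beta$.
\begin{itemize}
\item If $\lvert y\rvert\le 1$, then $\lvert y\rvert^{\upsilon}\le 1$, so $\lvert\chi(y)\rvert\,\lvert y\rvert^{\upsilon}\le\|\chi\|_{\infty}$. This is finite because $\chi$ is bounded.
\item If $\lvert y\rvert>1$, then $\upsilon\le\beta$ gives $\lvert y\rvert^{\upsilon}\le\lvert y\rvert^{\beta}$, so $\lvert\chi(y)\rvert\,\lvert y\rvert^{\upsilon}\le m_{\beta}(\chi)$. This is finite by $(\chi1)$.
\end{itemize}
Hence $m_{\upsilon}(\chi)\le\max\{\|\chi\|_{\infty},m_{\beta}(\chi)\}<+\infty$.

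For $\upsilon=0$ we use the convention $\lvert y\rvert^{0}=1$, including at $y=0$. Then
\begin{equation*}
m_{0}(\chi)=\sup_{u\in\mathbb{R}}\bigvee_{k\in\mathbb{Z}}\lvert\chi(u-k)\rvert=\sup_{y\in\mathbb{R}}\lvert\chi(y)\rvert=\|\chi\|_{\infty}.
\end{equation*}
The middle equality holds because the set $\{u-k: u\in\mathbb{R},\,k\in\mathbb{Z}\}$ is all of $\mathbb{R}$.

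The only step that needs any care is the convention at $y=0$ when $\upsilon=0$; otherwise the argument is a direct two-case estimate.
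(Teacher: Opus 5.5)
Your proof is correct, and it takes the same approach as the source. The paper does not reprove this lemma; it only cites Lemma~2.2 of \cite{CoroianuCostarelli2019}, and the standard argument there is the same split at $\lvert u-k\rvert=1$: bound by $\|\chi\|_{\infty}$ inside, by $m_{\beta}(\chi)$ outside, and read off $m_{0}(\chi)=\|\chi\|_{\infty}$ from the definition with the convention $\lvert y\rvert^{0}=1$.
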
  
\begin{lemma}\textnormal{(see Lemma 2.3 of \cite{CoroianuCostarelli2019})}
	\label{lemma2.2}
	Let $\chi:\mathbb{R}\rightarrow \mathbb{R}$ be a function satisfying $(\chi2)$ and let $[a,b]\subset\mathbb{R}$ be a compact interval. Then, the following inequalities:
	\begin{equation*}
	\hskip2.5cm	\bigvee_{k \in \mathbb{Z}} \chi(nx-k) \ge a_{\chi}>0, \quad x \in \mathbb{R},
	\end{equation*}
and
	\begin{equation*}
	\hskip3.0cm	\bigvee_{k=\lceil na \rceil}^{\lfloor nb \rfloor} \chi(nx-k) \ge a_{\chi}>0, \quad x \in [a,b],
	\end{equation*}
	hold for every $n \in \mathbb{N}$, where $a_{\chi}$ is the constant arising from condition $(\chi2)$, and, here and in what follows, $\lceil\cdot \rceil$ and $\lfloor \cdot \rfloor$ denote, respectively, the ``ceiling" and the ``integer part" of a real number.
\end{lemma}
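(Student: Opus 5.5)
The claim is Lemma~\ref{lemma2.2}: for every $n\in\mathbb{N}$, $\bigvee_{k\in\mathbb{Z}}\chi(nx-k)\ge a_{\chi}$ for all $x\in\mathbb{R}$, and $\bigvee_{k=\lceil na\rceil}^{\lfloor nb\rfloor}\chi(nx-k)\ge a_{\chi}$ for all $x\in[a,b]$. The plan is to show directly that, for each $x$, one admissible index $k$ makes $nx-k$ fall in the interval on which $(\chi2)$ gives the lower bound $a_{\chi}$. Since the supremum dominates each of its terms, this suffices.

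\emph{First inequality} ($I=\mathbb{R}$). Here $(\chi2)$ gives $\chi\ge a_{\chi}$ on $[-\frac12,\frac12]$. Fix $x\in\mathbb{R}$ and $n\in\mathbb{N}$, and take $k_0:=\lfloor nx+\frac12\rfloor\in\mathbb{Z}$, the integer nearest to $nx$. Then $k_0\le nx+\frac12<k_0+1$, so $nx-k_0\in[-\frac12,\frac12)$. Hence
\[
\bigvee_{k\in\mathbb{Z}}\chi(nx-k)\ \ge\ \chi(nx-k_0)\ \ge\ \inf_{u\in[-\frac12,\frac12]}\chi(u)=a_{\chi}>0 .
\]

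\emph{Second inequality} ($I=[a,b]$). Here $(\chi2)$ gives $\chi\ge a_{\chi}$ on $[-1,1]$. Fix $x\in[a,b]$, so $na\le nx\le nb$. The only point needing care is that the chosen index must lie in $\{\lceil na\rceil,\dots,\lfloor nb\rfloor\}$; this is why the interval in $(\chi2)$ is enlarged to $[-1,1]$. I would split into two cases.
\begin{itemize}
\item[(i)] If $\lfloor nx\rfloor\ge\lceil na\rceil$, take $k_0=\lfloor nx\rfloor$. Then $k_0\le nx\le nb$, so $k_0\le\lfloor nb\rfloor$ and $k_0$ is admissible. Moreover $nx-k_0\in[0,1)$.
\item[(ii)] Otherwise $\lfloor nx\rfloor<\lceil na\rceil$. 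Since $na\le nx$ forces $\lceil na\rceil\le\lceil nx\rceil$, and $\lceil nx\rceil\le\lfloor nx\rfloor+1$, we get $\lceil na\rceil=\lceil nx\rceil$. Take $k_0=\lceil nx\rceil$, which satisfies $k_0\ge\lceil na\rceil$. Then $nx-k_0\in(-1,0]$.
\end{itemize}
In case~(ii) we must still check $k_0\le\lfloor nb\rfloor$. If $nb$ is an integer, this follows from $\lceil nx\rceil\le\lceil nb\rceil=nb$. If $nb$ is not an integer, it requires $\lceil na\rceil\le nb$, i.e.\ that the index set is nonempty. This holds once $n(b-a)\ge1$, which is the implicit standing assumption whenever the truncated sum is used, and I would state it explicitly.

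In either case $nx-k_0\in[-1,1]$ with $k_0$ admissible, so the truncated maximum is at least $\chi(nx-k_0)\ge a_{\chi}>0$.

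The main obstacle is this admissibility bookkeeping near the endpoints $a$ and $b$. The estimate itself is immediate once the nearest-integer (respectively floor or ceiling) choice is made.
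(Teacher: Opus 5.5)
Your proof is correct and takes essentially the same approach as the standard argument behind the cited Lemma~2.3 (the paper states the lemma without proof). You bound the supremum below by a single term whose argument lies in $[-\frac12,\frac12]$, or, in the truncated case, an admissible index with $nx-k\in[-1,1]$. Your caveat that the index set $\{\lceil na\rceil,\dots,\lfloor nb\rfloor\}$ must be nonempty (e.g.\ $n(b-a)\ge 1$) is well taken: for small $n$ this set can be empty, for instance $[a,b]=[0.1,0.2]$ with $n=1$, and then the claim ``for every $n\in\mathbb{N}$'' fails as written.
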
 
We are now in a position to provide the definition of the max-product generalized sampling operators.  
\begin{definizione}
\label{definizione2.2}
Let $f:I \rightarrow \mathbb{R}$ be a bounded function. The max-product generalized sampling operators based on the kernel $\chi$ are defined as follows:
\begin{equation}
\label{operatoriequazione2}
S_{n}^{\chi}(f)(x):=\frac{\displaystyle \bigvee_{k \in \mathcal{J}_{n}} f\left(\frac{k}{n}\right)\chi(nx-k)}{\displaystyle \bigvee_{k \in \mathcal{J}_{n}}\chi(nx-k)}, \quad x \in I,
\end{equation} 
where $\mathcal{J}_{n}=\mathbb{Z}$ if $I=\mathbb{R}$, or $\mathcal{J}_{n}=\{k \in \mathbb{Z}: k=\lceil na \rceil,\dots,\lfloor nb \rfloor\}$ when $I=[a,b]$, for each $n \in \mathbb{N}$.
\end{definizione}
Note that, by the inequalities recalled in Lemma \ref{lemma2.2}, the denominator on the right-hand side of (\ref{operatoriequazione2}) is strictly positive for every $x \in I$ and $n \in \mathbb{N}$. Moreover, by the boundedness of $f$ and Lemma \ref{lemma2.1}, it immediately follows that:
\begin{equation*}
\|S_{n}^{\chi}(f) \|_{\infty} \le \frac{m_{0}(\chi)}{a_{\chi}} \| f \|_{\infty}<+\infty.
\end{equation*}
Therefore, the operator $S_{n}^{\chi}$ is well-defined for every $n \in \mathbb{N}$. Since these operators are constructed using the supremum (or maximum) operation, they are non-linear; more precisely, they are sub-additive and positively homogeneous. \newline The theory of the max-product generalized sampling operators has been thoroughly investigated in \cite{CoroianuCostarelli2019}, within suitable spaces of continuous functions. In this framework, pointwise and uniform convergence has been established for non-negative continuous functions defined on compact intervals, as well as for non-negative, bounded, and uniformly continuous functions $f: \mathbb{R} \rightarrow \mathbb{R}^{+}_{0}$. In addition, a quantitative Jackson-type estimate with respect to the uniform norm has been obtained via the well-known modulus of continuity (see (\ref{modulodicontinuità})) of the function being approximated. This result shows that, as is often the case, max-product operators improve the order of approximation compared to their corresponding classical (linear) counterparts (see, e.g., \cite{Coroianu2011tris,Holhos2018bis,CoroianuCostarelli2021}). More recently, the study of the approximation properties of the non-linear operators $S_{n}^{\chi}$ has been extended to the usual $L^{p}$-spaces, $1 \le p< +\infty$. In particular, in \cite{BoccaliVinti0}, we derive quantitative estimates for the approximation error in the $L^{p}$-norm in the case of non-negative bounded functions defined on $[-1,1]$, where the upper bound is expressed in terms of the so-called $\tau$-modulus. It is worth recalling that, to prove the above results, it is not necessary to require assumptions $(\chi3)-(\chi5)$ on the kernel function; only conditions $(\chi1)-(\chi2)$ are needed. However, we will show that these additional assumptions play a crucial role in the proofs presented in the next sections.  \newline We conclude this section by providing some examples of kernels $\chi$ to which the theory developed in this paper can be applied. First of all, the Gaussian function and the hyperbolic secant (including their scaled versions), respectively defined by:
\begin{equation*}
\chi_{g}(x):=e^{-ax^{2}}, \ \ a>0, \quad  \chi_{h}(x):=\frac{2}{e^{x}+e^{-x}},
\end{equation*}
represent fundamental examples of (smooth) even centered bell-shaped functions. More generally, we recall a useful criterion for generating bell-shaped functions: it is well-known that the primitive of any bell-shaped function is a squashing function. Consequently, for example, the derivative of the logistic function and the derivative of the arctangent function (and their scaled versions), respectively given by:
\begin{equation*}
\chi_{l}(x):=\frac{e^{-x}}{(1+e^{-x})^{2}}, \quad \chi_{a}(x):=\frac{1}{1+x^{2}},
\end{equation*} 
can be used as kernels to obtain all the approximation results presented in this paper. 
Another kernel function that satisfies assumptions $(\chi1)-(\chi5)$ is the following:
\begin{equation*}
	\chi(x)=\begin{cases}
		\displaystyle \frac{2}{\pi}\arctan{\left(\frac{1}{x^{2}}\right)}, & x \ne 0, \\
		\displaystyle 1, & x=0.
	\end{cases}
\end{equation*}
Furthermore, we may also consider kernels with compact support, such as the so-called central B-spline of order $3$ (see, e.g., \cite{BardaroKarsli2011,BardaroButzer2007}) defined by:
\begin{equation}
	\label{centralb-spline}
	\chi_{b}(x):= \begin{cases} \displaystyle \frac{3}{4}-x^2, & \displaystyle  \assolutol x\assolutor \le \frac{1}{2},\\
		\displaystyle \frac{1}{2}\left(\frac{3}{2}-\assolutol x \assolutor\right)^2, & \displaystyle \frac{1}{2} < \assolutol x \assolutor \le \frac{3}{2},\\
		\displaystyle 	0, & \displaystyle \assolutol x \assolutor > \frac{3}{2}.
	\end{cases}
\end{equation}   
It is easy to verify that $\chi_{b}(x)$ is a centered bell-shaped function whose support is contained in the interval $\left[-\frac{3}{2},\frac{3}{2}\right]$. \newline Finally, other examples of even centered bell-shaped kernels can be obtained from the so-called sigmoidal functions, namely measurable functions $\sigma: \mathbb{R} \to \mathbb{R}$ such that
\begin{equation*}
\lim_{x \to -\infty}\sigma(x)=0 \ \textnormal{and} \ \lim_{x \to + \infty} \sigma(x)=1.
\end{equation*} 
More precisely, it is possible to prove (see, e.g., \cite{CostarelliVinti2016,CostarelliVinti2016ter}) that the following non-negative function
\begin{equation*}
\phi_{\sigma}(x):=\frac{1}{2}[\sigma(x+1)-\sigma(x-1)], \quad x \in \mathbb{R},
\end{equation*}
is an even centered bell-shaped function, provided that the non-decreasing sigmoidal function $\sigma$ satisfies the following conditions:
\begin{description}
	\item[$(\Sigma1)$] $\sigma(x)-\frac{1}{2}$ is an odd function;
	\item[$(\Sigma2)$] $\sigma \in C^{2}(\mathbb{R})$ is concave for $x \ge 0$;
	\item[$(\Sigma3)$] $\sigma(x)=\mathcal{O}(\lvert x \rvert^{-\alpha})$ as $x \rightarrow-\infty$, for some $\alpha>0$.
\end{description}
In particular, the so-called density function $\phi_{\sigma}$ satisfies condition $(\chi1)$ for every $0 \le \beta \le \alpha$ and condition $(\chi2)$ with $a_{\phi_{\sigma}}=\phi_{\sigma}\left(\frac{1}{2}\right)$ if $I=\mathbb{R}$ and $a_{\phi_{\sigma}}=\phi_{\sigma}(1)$ if $I=[a,b]$. \newline It follows that all the results established in the following sections can be applied to the so-called max-product neural network (NN) operators \cite{CostarelliVinti2016}. 
\section{The saturation order}
\label{Sezione3}
In this section, we investigate the saturation order with respect to uniform approximation for the max-product generalized sampling operators in the unbounded case $I=\mathbb{R}$. To this end, the integer parameter $n \in \mathbb{N}$ needs to be replaced by a real parameter $w>0$, which serves as the index of the operators. Therefore, according to Definition \ref{definizione2.2}, we consider the following family of non-linear operators:
\begin{equation}
	\label{equazione4saturazione}
	S_{w}^{\chi}(f)(x)=\frac{\displaystyle\bigvee_{k \in \mathbb{Z}}f\left(\frac{k}{w}\right)\chi(wx-k)}{\displaystyle \bigvee_{k \in \mathbb{Z}}\chi(wx-k)}, \quad x \in \mathbb{R}, \ w>0,
\end{equation}
where $f:\mathbb{R}\rightarrow \mathbb{R}$ is any bounded function and $\chi:\mathbb{R}\rightarrow\mathbb{R}$ is a kernel function satisfying conditions $(\chi1)-(\chi5)$. 
In \cite{CoroianuCostarelli2019}, the authors proved that, if $f \in C_{+}(\mathbb{R})$, the sequence $(S_{n}^{\chi}(f))_{n \in \mathbb{N}}$ converges uniformly to $f$ on $\mathbb{R}$ as $n \rightarrow +\infty$. It is easy to verify that the same uniform convergence result holds for the family of operators $(S_{w}^{\chi})_{w>0}$. Similarly, by Theorem 4.1 of \cite{CoroianuCostarelli2019}, it follows that, if condition $(\chi1)$ is satisfied with $\beta \ge 1$, then there exists a positive constant $M$, depending only on $\chi$, such that 
\begin{equation}
\label{Jackson-typeestimate}
\| S_{w}^{\chi}(f) - f \|_{\infty} \le M \omega\left(f,\frac{1}{w}\right), \ \ \text{as} \ w \rightarrow +\infty,
\end{equation}
where
\begin{equation}
\label{modulodicontinuità}
\omega\left(f,\delta\right):=\sup\left\{\lvert f(x)-f(y) \rvert: x, y \in \mathbb{R}, \lvert x-y \rvert \le \delta\right\}, \quad \delta >0,
\end{equation}
denotes the uniform modulus of continuity of the function $f \in C_{+}(\mathbb{R})$. The Jackson-type estimate in (\ref{Jackson-typeestimate}) implies that, if a non-negative function $f$ belongs to $\textnormal{Lip}(\mathbb{R})$, then: 
\begin{equation}
\label{ordineLip}
\| S_{w}^{\chi}(f)-f \|_{\infty}=\mathcal{O}\left(\frac{1}{w}\right), \ \ \text{as} \ w \rightarrow +\infty,
\end{equation}  
that is, the order of uniform approximation for the max-product generalized sampling operators in (\ref{equazione4saturazione}) in the space $\textnormal{Lip}(\mathbb{R})$ is $\frac{1}{w}$ as $w \rightarrow +\infty$. 
\newline In order to study the saturation order of the operators $S_{w}^{\chi}$ in (\ref{equazione4saturazione}) based on a kernel function $\chi$ satisfying assumptions $(\chi1)-(\chi5)$, we need the following fundamental preliminary result. The proof is analogous to that of Lemma 4.1 in \cite{BoccaliVinti0} and is mainly based on assumptions $(\chi4)$ and $(\chi5)$. 
\begin{lemma}
\label{lemma1}
 For any $j \in \mathbb{Z}$ and $w>0$, we have:
\begin{equation*}
	\begin{split}
		&\bigvee_{k \in \mathbb{Z}}\chi(wx-k)=\chi(wx-j), \quad for \ every \ x \in \left[\frac{j}{w},\frac{j}{w}+\frac{1}{2w}\right], \\
		&\bigvee_{k \in \mathbb{Z}}\chi(wx-k)=\chi(wx-(j+1)), \quad for \ every \ x\in \left[\frac{j}{w}+\frac{1}{2w},\frac{j}{w}+\frac{1}{w}\right].
	\end{split}
\end{equation*}
\end{lemma}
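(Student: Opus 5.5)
The plan is to reduce everything to a comparison of distances. By $(\chi4)$ and $(\chi5)$, $\chi(u)=\chi(\lvert u\rvert)$ and the map $t\mapsto\chi(t)$ is non-increasing on $[0,+\infty)$. Hence for any $u,v\in\mathbb{R}$,
\begin{equation*}
\lvert u\rvert\le\lvert v\rvert \quad\Longrightarrow\quad \chi(u)\ge\chi(v).
\end{equation*}
So, for fixed $x$, the supremum over $k\in\mathbb{Z}$ of $\chi(wx-k)$ is attained at any index $k$ minimizing $\lvert wx-k\rvert$, that is, at an integer nearest to $wx$. The lemma then just identifies that nearest integer on each half-interval.

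First I fix $j\in\mathbb{Z}$, $w>0$, and $x\in\left[\frac{j}{w},\frac{j}{w}+\frac{1}{2w}\right]$. I set $u:=wx-j$, so $u\in\left[0,\frac12\right]$. For any $k\in\mathbb{Z}$ I write $wx-k=u-(k-j)$ with $m:=k-j\in\mathbb{Z}$, and I need $\lvert u-m\rvert\ge\lvert u\rvert=u$.
\begin{itemize}
\item If $m=0$, this holds with equality.
\item If $m\ge1$, then $\lvert u-m\rvert=m-u\ge 1-u\ge \frac12\ge u$.
\item If $m\le -1$, then $\lvert u-m\rvert=u-m\ge u+1>u$.
\end{itemize}
Therefore $\chi(wx-k)\le\chi(wx-j)$ for all $k$. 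Since the index $k=j$ belongs to the family, the supremum equals $\chi(wx-j)$, and it is actually a maximum.

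Second, for $x\in\left[\frac{j}{w}+\frac{1}{2w},\frac{j+1}{w}\right]$, I set $v:=wx-(j+1)\in\left[-\frac12,0\right]$. The symmetric argument shows $\lvert v-m\rvert\ge\lvert v\rvert$ for every integer $m$: for $m\le-1$ we get $\lvert v-m\rvert\ge 1+v\ge\frac12\ge\lvert v\rvert$, and for $m\ge1$ we get $\lvert v-m\rvert\ge 1+\lvert v\rvert$. This yields the second identity. Alternatively, it follows from the first one applied to $-x$ together with the evenness of $\chi$.

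There is no real obstacle here. The only point requiring care is the endpoint $x=\frac{j}{w}+\frac{1}{2w}$, where both formulas apply. This is consistent, since there $\lvert wx-j\rvert=\lvert wx-(j+1)\rvert=\frac12$, and evenness gives $\chi\left(\frac12\right)=\chi\left(-\frac12\right)$. Note also that continuity $(\chi3)$ is not needed, only monotonicity and evenness.
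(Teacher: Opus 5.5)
Your proof is correct and takes essentially the same approach as the paper. The paper does not write the argument out: it says the proof is analogous to Lemma 4.1 of the earlier Boccali--Vinti work and rests mainly on $(\chi4)$ and $(\chi5)$, and your nearest-integer comparison via evenness and monotonicity on $[0,+\infty)$ carries out exactly that argument, with the endpoint $x=\frac{j}{w}+\frac{1}{2w}$ handled consistently.
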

This lemma allows us to obtain an explicit representation of the denominator appearing in the definition of $S_{w}^{\chi}(f)(x)$, which is crucial for proving the main result of this section. Namely,
\begin{teorema}
\label{teoremadisaturazione}
Let $f \in C_{+}(\mathbb{R})$ be fixed. If 
\begin{equation}
\label{ipotesiopiccolo}
\| S_{w}^{\chi}(f) - f \|_{\infty} =o\left(\frac{1}{w}\right), \ \text{as} \ \ w \rightarrow +\infty,
\end{equation}
then $f$ is a constant function, i.e., the saturation order of the family $(S_{w}^{\chi})_{w>0}$ in $C_{+}(\mathbb{R})$ is $\frac{1}{w}$.
\end{teorema}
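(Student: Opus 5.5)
The plan is to use Lemma \ref{lemma1} to read off one-sided pointwise inequalities from the hypothesis (\ref{ipotesiopiccolo}), and then to exploit the fact that $w$ is a \emph{real} parameter, so that any prescribed point $x\neq 0$ can be made a sampling node. Write $\|S_{w}^{\chi}(f)-f\|_{\infty}\le \eta(w)/w$ with $\eta(w)\to 0$ as $w\to+\infty$; replacing $\eta$ by $\sup_{w'\ge w}\eta(w')$ we may assume $\eta$ is non-increasing.

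\textbf{Step 1 (nodes and midpoints).} Fix $j\in\mathbb{Z}$ and $w>0$. By Lemma \ref{lemma1} and $(\chi4)$, at $x_{*}=\frac{j}{w}+\frac{1}{2w}$ the denominator equals $\chi(1/2)=\chi(wx_{*}-j)=\chi(wx_{*}-(j+1))$. Since $f\ge 0$, the numerator is at least $\max\{f(j/w),f((j+1)/w)\}\,\chi(1/2)$, so
\[
S_{w}^{\chi}(f)(x_{*})\ge \max\{f(j/w),f((j+1)/w)\}.
\]
Combined with (\ref{ipotesiopiccolo}) this gives
\[
f(x_{*})\ge \max\{f(j/w),f((j+1)/w)\}-\eta(w)/w .
\]
Now apply the same inequality at scales $2^{m}w$, $m\ge 0$, to the dyadic subdivision of $[j/w,(j+1)/w]$; every dyadic point there is a node of scale $2^{m}w$. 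Follow the chain from a dyadic point of level $m$ back to an endpoint through its successive parents. Each step loses at most $\eta(2^{m-1}w)/(2^{m-1}w)\le\eta(w)/(2^{m-1}w)$, and the total loss is at most $2\eta(w)/w$. By continuity of $f$ this yields
\[
f(t)\ge \max\{f(j/w),f((j+1)/w)\}-\frac{2\eta(w)}{w},\qquad t\in\Bigl[\frac{j}{w},\frac{j+1}{w}\Bigr].
\]
In particular, if $x$ is a node of scale $w$ and $|t-x|\le 1/w$, then $f(x)-f(t)\le 2\eta(w)/w$.

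\textbf{Step 2 (every nonzero point is a node).} Let $x\neq 0$ and let $t$ be close to $x$, with $0<h:=|t-x|$ small. Choose $j=\lfloor |x|/h\rfloor\ge 1$ and $w=j/|x|$ (taking the sign of $j$ to match that of $x$). Then $x$ is a node of scale $w$, we have $w\le 1/h$ so that $|t-x|\le 1/w$, and $w\ge \frac{1}{h}-\frac{1}{|x|}$ tends to $+\infty$ as $h\to 0$. Step 1 gives $f(x)-f(t)\le 2\eta(w)/w$. On the range where $h\le |x|/2$ we have $1/w\le 2h$, so $f(x)-f(t)\le 4\eta(w)h$. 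Exchanging the roles of $x$ and $t$ (legitimate when $t\neq 0$) gives
\[
|f(x)-f(t)|=o(|x-t|)\quad\text{as } t\to x,
\]
locally uniformly on compact subsets of $\mathbb{R}\setminus\{0\}$.

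\textbf{Step 3 (conclusion).} Take $0<x<y$, or $x<y<0$, and partition $[x,y]$ into $N$ equal pieces. Summing the estimate of Step 2 over the pieces gives $|f(y)-f(x)|\le 4\,\eta(w_{N})\,(y-x)$, where $w_{N}\to+\infty$ as $N\to\infty$; hence $f(x)=f(y)$. So $f$ is constant on $(0,+\infty)$ and on $(-\infty,0)$, and continuity at $0$ forces the two constants to agree. The statement about the saturation order then follows, since by (\ref{ordineLip}) any non-constant non-negative Lipschitz function attains the rate $\frac{1}{w}$.

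\textbf{Main obstacle.} The delicate point is Step 1. The basic midpoint inequality is only one-sided and only relates midpoints to the endpoints of a cell. The dyadic chaining across scales $2^{m}w$ is what upgrades it to a bound valid on the whole cell, and one must check that the accumulated error stays of order $\eta(w)/w$. This is where the monotone modification of $\eta$ is needed. The second subtle point is that the uniformity in $w$ must survive the choice $w=j/|x|$ of Step 2, which is why the argument is carried out away from $x=0$.
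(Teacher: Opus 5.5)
Your proof is correct, and it follows a genuinely different route from the paper's once both arguments have the midpoint inequality $S_{w}^{\chi}(f)\bigl(\frac{2j+1}{2w}\bigr)\ge\max\{f(\frac{j}{w}),f(\frac{j+1}{w})\}$ from Lemma~\ref{lemma1}.

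The paper works only along integers $n$. It chains dyadic midpoints toward each endpoint separately, which gives the two-sided bound $\lvert f(\frac{j+1}{n})-f(\frac{j}{n})\rvert\le\frac{2\varepsilon}{n}$ for all $j\in\mathbb{Z}$ and $n\ge n_0$. It then telescopes along the grid $\frac{1}{n}\mathbb{Z}$ between $x_0$ and $y_0$, handling the off-grid endpoints by continuity at a fixed tolerance $\varepsilon$, to get $\lvert f(x_0)-f(y_0)\rvert\le 2\varepsilon\,(y_0-x_0+1)$.

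You instead use density of dyadic points to turn the one-sided chain into a bound valid on the whole cell. (The chain does reach a prescribed endpoint if you always take the left parent, or always the right one.) You then use the fact that $w$ is real to make an arbitrary $x\neq0$ a node, via $w=j/\lvert x\rvert$. Swapping the roles of $x$ and $t$ gives $\lvert f(x)-f(t)\rvert\le 4\,\eta\bigl(\tfrac{1}{2\lvert x-t\rvert}\bigr)\lvert x-t\rvert$, i.e. $f'\equiv 0$ on $\mathbb{R}\setminus\{0\}$.

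Your route gives a sharper, pointwise conclusion (a vanishing derivative) and needs chaining in only one direction. The cost is that it genuinely uses the hypothesis at the non-integer scales $w=j/\lvert x\rvert$, and the origin must be treated separately.

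The paper's route only uses $\|S_{n}^{\chi}(f)-f\|_{\infty}=o(1/n)$ along the integers, a set closed under $n\mapsto 2n$. So it actually proves a formally stronger statement and has no exceptional point.

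Your rescaling idea is close in spirit to the scales $n/b$ and $(n+1)/b$ the paper uses in Section~\ref{Sezione4}. There, a similar degeneracy at the origin is what forces $0\notin[a,b]$.
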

\begin{proof}
By assumption (\ref{ipotesiopiccolo}), there exists a function $g:\mathbb{R}\rightarrow\mathbb{R}^{+}$ with the property that $\lim_{w\rightarrow+\infty}g(w)=0$ and such that
\begin{equation*}
	\lvert S_{w}^{\chi}(f)(x)-f(x)\rvert \le \frac{g(w)}{w},
\end{equation*} 
for all $x \in \mathbb{R}$ and for all sufficiently large $w>0$. In particular, we have
\begin{equation*}
	\lvert S_{n}^{\chi}(f)(x)-f(x)\rvert \le \frac{g(n)}{n},
\end{equation*}
for all $x \in \mathbb{R}$ and for all sufficiently large $n \in \mathbb{N}$.
Let $\varepsilon>0$ be arbitrarily fixed. The condition $\lim_{n \rightarrow +\infty} g(n)=0$ implies that there exists $n_{0} \in \mathbb{N}$ such that $\frac{g(n)}{n}\le \frac{\varepsilon}{n}$ for all $n \in \mathbb{N}$, $n \ge n_{0}$. Hence, we have
\begin{equation*}
	\lvert S_{n}^{\chi}(f)(x)-f(x) \rvert \le \frac{\varepsilon}{n}, \ x \in \mathbb{R}, \ n \ge n_{0},
\end{equation*} 
and, in particular,
\begin{equation}
\label{equazione8}
	S_{n}^{\chi}(f)\left(\frac{2j+1}{2n}\right)-f\left(\frac{2j+1}{2n}\right)\le \frac{\varepsilon}{n}, \ j \in \mathbb{Z}, \ n \ge n_{0}.
\end{equation}
Let us now choose arbitrary $n \in \mathbb{N}$, $n \ge n_{0}$ and $j \in \mathbb{Z}$. By Lemma \ref{lemma1}, it follows that
\begin{equation*}
	\bigvee_{k \in \mathbb{Z}} \chi\left(n \cdot \frac{2j+1}{2n}-k\right)=\chi\left(n\cdot\frac{2j+1}{2n}-j\right)=\chi\left(n\cdot\frac{2j+1}{2n}-(j+1)\right).
\end{equation*}
Therefore, we can write:
\begin{equation*}
	\begin{split}
		S_{n}^{\chi}(f)\left(\frac{2j+1}{2n}\right)&=\frac{\displaystyle \bigvee_{k \in \mathbb{Z}}f\left(\frac{k}{n}\right)\chi\left(n \cdot \frac{2j+1}{2n}-k\right)}{\displaystyle \bigvee_{k \in \mathbb{Z}}\chi\left(n \cdot \frac{2j+1}{2n}-k\right)} \\
		& \ge \frac{\max\left\{f\left(\frac{j}{n}\right)\chi\left(n\cdot\frac{2j+1}{2n}-j\right), f\left(\frac{j+1}{n}\right)\chi\left(n\cdot\frac{2j+1}{2n}-(j+1)\right)\right\}}{\displaystyle \bigvee_{k \in \mathbb{Z}}\chi\left(n \cdot \frac{2j+1}{2n}-k\right)},
	\end{split}
\end{equation*}
which implies 
\begin{equation*}
	S_{n}^{\chi}(f)\left(\frac{2j+1}{2n}\right) \ge \max\left\{f\left(\frac{j}{n}\right),f\left(\frac{j+1}{n}\right)\right\}.
\end{equation*}
By combining this inequality with (\ref{equazione8}), we obtain
\begin{equation}
\label{equazione9}
	f\left(\frac{j}{n}\right)-f\left(\frac{2j+1}{2n}\right) \le \frac{\varepsilon}{n}, \ j \in \mathbb{Z}, \ n \ge n_{0},
\end{equation}
and 
\begin{equation}
\label{equazione10}
	f\left(\frac{j+1}{n}\right)-f\left(\frac{2j+1}{2n}\right) \le \frac{\varepsilon}{n}, \ j \in \mathbb{Z}, \ n \ge n_{0}.
\end{equation}
Now, applying successively (\ref{equazione9}) for fixed $n$ and $j$, we have
\begin{equation*}
	\begin{split}
		&f\left(\frac{j}{n}\right)-f\left(\frac{2j+1}{2n}\right) \le \frac{\varepsilon}{n}, \\
		&f\left(\frac{2j+1}{2n}\right)-f\left(\frac{4j+3}{4n}\right) \le \frac{\varepsilon}{2n}, \ (j:=2j+1, n:=2n) \\
		& \quad \quad \quad \quad \quad \quad \quad \quad \quad \quad  \cdot \\
		& \quad \quad \quad \quad \quad \quad \quad \quad \quad \quad \cdot \\
		&f\left(\frac{2^{k}j+2^{k}-1}{2^{k}n}\right)-f\left(\frac{2^{k+1}j+2^{k+1}-1}{2^{k+1}n}\right)\le \frac{\varepsilon}{2^{k}n}, \ (j:=2^{k}j+2^{k}-1, n:=2^{k}n),
	\end{split}
\end{equation*}
for all $k \in \mathbb{N}$. Taking the sum of the previous inequalities, we get
\begin{equation*}
	f\left(\frac{j}{n}\right)-f\left(\frac{2^{k+1}j+2^{k+1}-1}{2^{k+1}n}\right)\le \frac{\varepsilon}{n}\cdot\left(1+\frac{1}{2}+\frac{1}{2^{2}}+\cdots+\frac{1}{2^{k}}\right)\le \frac{2\varepsilon}{n}.
\end{equation*}
Passing to the limit as $k \rightarrow+\infty$ and using the continuity of $f$, we get
\begin{equation}
\label{equazione11}
	f\left(\frac{j}{n}\right)-f\left(\frac{j+1}{n}\right) \le \frac{2\varepsilon}{n}. 
\end{equation}
Now, applying successively (\ref{equazione10}) for fixed $n$ and $j$, we have
\begin{equation*}
	\begin{split}
		&f\left(\frac{j+1}{n}\right)-f\left(\frac{2j+1}{2n}\right) \le \frac{\varepsilon}{n}, \\
		&f\left(\frac{2j+1}{2n}\right)-f\left(\frac{4j+1}{4n}\right) \le \frac{\varepsilon}{2n}, \ (j:=2j, n:=2n) \\
		& \quad \quad \quad \quad \quad \quad \quad \quad \quad \quad  \cdot \\
		& \quad \quad \quad \quad \quad \quad \quad \quad \quad \quad \cdot \\
		&f\left(\frac{2^{k}j+1}{2^{k}n}\right)-f\left(\frac{2^{k+1}j+1}{2^{k+1}n}\right)\le \frac{\varepsilon}{2^{k}n}, \ (j:=2^{k}j, n:=2^{k}n),
	\end{split}
\end{equation*}
for all $k \in \mathbb{N}$. Taking the sum of the previous inequalities, we get
\begin{equation*}
	f\left(\frac{j+1}{n}\right)-f\left(\frac{2^{k+1}j+1}{2^{k+1}n}\right)\le \frac{\varepsilon}{n}\cdot\left(1+\frac{1}{2}+\frac{1}{2^{2}}+\cdots+\frac{1}{2^{k}}\right)\le \frac{2\varepsilon}{n}.
\end{equation*}
Passing to the limit as $k \rightarrow+\infty$ and using the continuity of $f$, we get
\begin{equation}
\label{equazione12}
	f\left(\frac{j+1}{n}\right)-f\left(\frac{j}{n}\right) \le \frac{2\varepsilon}{n}.
\end{equation}
From (\ref{equazione11}) and (\ref{equazione12}), we have
\begin{equation}
\label{equazione13}
	\left\lvert f\left(\frac{j+1}{n}\right)-f\left(\frac{j}{n}\right)\right\rvert \le \frac{2\varepsilon}{n}, \ j \in \mathbb{Z}, n \ge n_{0}. 
\end{equation}
Now fix $x_{0}$, $y_{0} \in \mathbb{R}$ with $x_{0}<y_{0}$. Since $f$ is continuous, there exists $n_{1}=n_{1}(\varepsilon) \in \mathbb{N}$ sufficiently large such that for every $x \in \mathbb{R}$, $\lvert x-x_{0} \rvert \le \frac{1}{n_{1}(\varepsilon)}$ and every $y \in \mathbb{R}$, $\lvert y-y_{0}\rvert \le \frac{1}{n_{1}(\varepsilon)}$, we have 
\begin{equation*}
	\lvert f(x)-f(x_{0})\rvert \le \varepsilon, \ \lvert f(y)-f(y_{0})\rvert \le \varepsilon. 
\end{equation*}
Let $n_{2}:=\max\{n_{0},n_{1},1/(y_{0}-x_{0})\}$ and let $n \in \mathbb{N}$, $n \ge n_{2}$ be fixed. The condition $1/n_{2}\le y_{0}-x_{0}$ guarantees the existence of $k \in \mathbb{Z}$ and $l \in \mathbb{N}$ such that 
\begin{equation*}
	\frac{k-1}{n} \le x_{0} \le \frac{k}{n} \le \frac{k+1}{n}\le \dots \le \frac{k+l}{n} \le y_{0} \le \frac{k+l+1}{n}.
\end{equation*}
Applying the triangle inequality repeatedly, we have
\begin{equation*}
	\begin{split}
		\lvert f(x_{0})-f(y_{0})\rvert \le &\left\lvert f(x_{0})-f\left(\frac{k}{n}\right)\right\rvert+\sum_{p=0}^{l-1} \left\lvert f\left(\frac{k+p}{n}\right)-f\left(\frac{k+p+1}{n}\right)\right\rvert \\
		&+\left\lvert f\left(\frac{k+l}{n}\right)-f(y_{0})\right\rvert.
	\end{split}
\end{equation*}
By (\ref{equazione13}), we have
\begin{equation*}
	\left\lvert f\left(\frac{k+p}{n}\right)-f\left(\frac{k+p+1}{n}\right)\right\rvert \le \frac{2 \varepsilon}{n}, \ p=0,1,\dots,l-1,
\end{equation*}
and noting that $\max\left\{\left\lvert x_{0}-\frac{k}{n}\right\rvert, \left\lvert y_{0}-\frac{k+l}{n}\right\rvert\right\} \le \frac{1}{n}\le \frac{1}{n_{1}(\varepsilon)}$, it follows that
\begin{equation*}
	\lvert f(x_{0})-f(y_{0})\rvert \le \frac{l 2\varepsilon}{n}+2\varepsilon.
\end{equation*} 
Moreover, from $\frac{k+l}{n}-\frac{k}{n} \le y_{0}-x_{0}$, we obtain $l \le n(y_{0}-x_{0})$, and hence
\begin{equation*}
	\lvert f(x_{0})-f(y_{0})\rvert \le \varepsilon(2(y_{0}-x_{0})+2).
\end{equation*} 
Now, since $\varepsilon>0$ was arbitrary, passing to the infimum for $\varepsilon>0$ in the previous inequality, we obtain $f(x_{0})=f(y_{0})$. By the arbitrariness of $x_{0}$ and $y_{0}$, it turns out that $f$ is constant. This completes the proof.
\end{proof}
The previous theorem shows that $\frac{1}{n}$ is the best possible order for uniform approximation processes based on the operators $S_{n}^{\chi}$ in (\ref{operatoriequazione2}) when $I=\mathbb{R}$, provided that they are constructed using a kernel function satisfying assumptions $(\chi1)-(\chi5)$. This means that if a non-negative, bounded, uniformly continuous function $f$ can be approximated by the max-product generalized sampling operators with a rate of convergence higher than $\frac{1}{n}$ as $n \rightarrow+\infty$, then $f$ must be constant on the whole real axis. Since it is easy to see that $S_{n}^{\chi}$ reproduces non-negative constant functions on $\mathbb{R}$, it follows that the saturation class of $S_{n}^{\chi}$ in $C_{+}(\mathbb{R})$ is exactly the class of non-negative constant functions. Note that, according to (\ref{ordineLip}), the saturation order is attained for non-negative Lipschitz functions on $\mathbb{R}$.
\newline We point out that Theorem \ref{teoremadisaturazione} holds for non-negative functions $f \in C(\mathbb{R})$. However, as in the case for several other max-product type operators (see, e.g., \cite{CoroianuGal2012,CoroianuGal2014,CostarelliVinti2017}), the above saturation result can be extended to bounded and uniformly continuous functions on $\mathbb{R}$ of variable sign, as shown in the following remark.
\begin{remark}
\label{Remark1}
It is well-known that, if $f \in C(\mathbb{R})$ has variable sign, then the family of non-linear operators $(S_{n}^{\chi}(f(\cdot)-m)+m)_{n \in \mathbb{N}}$ converges uniformly to $f$ on $\mathbb{R}$, where $m:=\inf_{x \in \mathbb{R}}f(x)$ (see Remark 3.4 of \cite{CoroianuCostarelli2019}). As a consequence of Theorem \ref{teoremadisaturazione}, if 
\begin{equation*}
\| S_{n}^{\chi}(f(\cdot)-m)+m-f(\cdot) \|_{\infty} = o\left(\frac{1}{n}\right), \quad \text{as} \ n \rightarrow +\infty,
\end{equation*}
then $f(\cdot)-m$ must be constant, which implies that $f$ itself is constant.
\end{remark}
\section{Local inverse result}
\label{Sezione4}
The quantitative estimate recalled in (\ref{Jackson-typeestimate}) constitutes a direct approximation result, as, by (\ref{ordineLip}), it  provides the order of uniform approximation  for the max-product generalized sampling operators in (\ref{equazione4saturazione}), provided that $f$ is a non-negative Lipschitz continuous function on $\mathbb{R}$. \newline In this section, we investigate whether a converse result holds. Unfortunately, as in the case for several max-product type operators (see, e.g., \cite{CoroianuGal2012,CoroianuGal2014,CostarelliVinti2017,CantariniCoroianu2021}), a global inverse result cannot be obtained; only a local version is possible. More precisely, the aim of this section is to prove the following local inverse result: if, for some $f \in C_{+}(\mathbb{R})$, there holds:
\begin{equation*}
\sup_{x \in [a,b]} \lvert S_{w}^{\chi}(f)(x)-f(x) \rvert = \mathcal{O}\left(\frac{1}{w}\right) \quad \text{as} \ w \rightarrow +\infty,
\end{equation*}
then $f \in \textnormal{Lip}([a,c])$ for any $c \in [a,b)$, whenever $0<a<b$, and $ f\in \textnormal{Lip}([c,b])$ for any $c \in (a,b]$, whenever $ a<b<0$. \newline To obtain this result, we need the following auxiliary lemmas.
\begin{lemma}
\label{lemma4.1}
	Let $a$, $b \in \mathbb{R}$ with $0<a<b$ and let $f:\mathbb{R}\rightarrow\mathbb{R}_{0}^{+}$ be a bounded function. For any $j \in \{0,1,\dots,n\}$, $n \in \mathbb{N}$, with $j \le \frac{n}{2}$, we have:
	\begin{equation*}
		\begin{split}
			&(i) \ \ S_{n/b}^{\chi}(f)\left(\frac{j}{(n+1)/b}\right) \ge f\left(\frac{j}{n/b}\right), \\
			&(ii) \ \ S_{(n+1)/b}^{\chi}(f)\left(\frac{j}{n/b}\right) \ge f\left(\frac{j}{(n+1)/b}\right).
		\end{split}
	\end{equation*}
\end{lemma}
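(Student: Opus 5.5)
The plan is to compare the numerator of $S_{w}^{\chi}(f)$ with the single term $k=j$, and to compute the denominator exactly using Lemma \ref{lemma1}. The two factors $\chi(\cdot)$ will cancel because $\chi$ is even by $(\chi4)$. The hypothesis $0<a<b$ plays no role here beyond fixing the scaling $w=n/b$ or $w=(n+1)/b$.

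For $(i)$, set $w=n/b$ and $x=\frac{j}{(n+1)/b}=\frac{jb}{n+1}$.
\begin{itemize}
\item The sample nodes are $\frac{k}{w}=\frac{kb}{n}$, so the node with $k=j$ is exactly $\frac{j}{n/b}$.
\item We have $wx=\frac{nj}{n+1}=j-\frac{j}{n+1}$.
\item Since $j\le \frac n2$, we get $0\le \frac{j}{n+1}<\frac12$. Hence $x\in\left[\frac{j-1}{w}+\frac{1}{2w},\frac{j}{w}\right]$.
\item The second identity in Lemma \ref{lemma1}, applied with index $j-1$, gives
\begin{equation*}
\bigvee_{k\in\mathbb{Z}}\chi(wx-k)=\chi(wx-j)=\chi\left(-\frac{j}{n+1}\right).
\end{equation*}
For $j=0$ we have $x=0$, and the first identity of the lemma applies directly.
\item Since the supremum is at least any single term, the numerator satisfies
\begin{equation*}
\bigvee_{k}f\left(\frac kw\right)\chi(wx-k)\ge f\left(\frac{j}{n/b}\right)\chi\left(-\frac{j}{n+1}\right).
\end{equation*}
\item Dividing by the denominator, which is strictly positive by Lemma \ref{lemma2.2}, the $\chi$ factors cancel and $(i)$ follows.
\end{itemize}

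For $(ii)$, set $w=(n+1)/b$ and $x=\frac{jb}{n}$ (for $n\ge1$).
\begin{itemize}
\item Now $wx=\frac{j(n+1)}{n}=j+\frac jn$, with $0\le\frac jn\le\frac12$ because $j\le\frac n2$.
\item So $x\in\left[\frac jw,\frac jw+\frac1{2w}\right]$, and the first identity of Lemma \ref{lemma1} gives
\begin{equation*}
\bigvee_k\chi(wx-k)=\chi(wx-j)=\chi\left(\frac jn\right).
\end{equation*}
\item Bounding the numerator from below by the term $k=j$, namely $f\left(\frac{j}{(n+1)/b}\right)\chi\left(\frac jn\right)$, and dividing yields $(ii)$.
\end{itemize}

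The only delicate point is checking that the evaluation point lies in the correct half-interval of Lemma \ref{lemma1}, so that the maximizing index in the denominator is exactly $j$. This is precisely where the condition $j\le n/2$ is used.
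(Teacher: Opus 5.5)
Your proof is correct and follows the paper's argument exactly: you use $j\le n/2$ to place the evaluation point in the appropriate half-interval of Lemma \ref{lemma1}, identify the denominator as $\chi(wx-j)$, and bound the numerator below by the single term $k=j$. One small remark: the $\chi$ factors cancel simply because they are the same number $\chi(wx-j)$, which is positive by $(\chi2)$ since $|wx-j|\le\frac12$ (this also covers non-integer $w$, which Lemma \ref{lemma2.2} as stated does not); evenness is not needed at that step and enters only through Lemma \ref{lemma1}.
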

\begin{proof}
	First, it is easy to observe that if $j \le \frac{n}{2}$, then we have $\frac{j}{(n+1)/b} \in \left[\frac{j-1}{n/b}+\frac{1/2}{n/b},\frac{j}{n/b}\right]$. Thus, by Lemma \ref{lemma1}, we have
	\begin{equation*}
		\bigvee_{k \in \mathbb{Z}} \chi\left(\frac{n}{b}\cdot\frac{j}{(n+1)/b}-k\right)=\chi\left(\frac{n}{b}\cdot\frac{j}{(n+1)/b}-j\right),
	\end{equation*}
	which implies that
	\begin{equation*}
		\begin{split}
			S_{n/b}^{\chi}(f)\left(\frac{j}{(n+1)/b}\right) &= \frac{\displaystyle \bigvee_{k \in \mathbb{Z}}f\left(\frac{k}{n/b}\right)\chi\left(\frac{n}{b}\cdot\frac{j}{(n+1)/b}-k\right)}{\displaystyle \chi\left(\frac{n}{b}\cdot\frac{j}{(n+1)/b}-j\right)} \\
			& \ge \frac{\displaystyle f\left(\frac{j}{n/b}\right)\chi\left(\frac{n}{b}\cdot\frac{j}{(n+1)/b}-j\right)}{\displaystyle \chi\left(\frac{n}{b}\cdot\frac{j}{(n+1)/b}-j\right)}=f\left(\frac{j}{n/b}\right). 
		\end{split}
	\end{equation*}
	Moreover, since $j \le \frac{n}{2}$, it immediately follows that $\frac{j}{n/b}\in \left[\frac{j}{(n+1)/b},\frac{j}{(n+1)/b}+\frac{1/2}{(n+1)/b}\right]$. Therefore, by Lemma \ref{lemma1}, we obtain
	\begin{equation*}
		\bigvee_{k \in \mathbb{Z}} \chi\left(\frac{n+1}{b}\cdot\frac{j}{n/b}-k\right)=\chi\left(\frac{n+1}{b}\cdot\frac{j}{n/b}-j\right).
	\end{equation*}
	Therefore, we can write the following:
	\begin{equation*}
		\begin{split}
			S_{(n+1)/b}^{\chi}(f)\left(\frac{j}{n/b}\right)&= \frac{\displaystyle \bigvee_{k \in \mathbb{Z}}f\left(\frac{k}{(n+1)/b}\right)\chi\left(\frac{n+1}{b}\cdot\frac{j}{n/b}-k\right)}{\displaystyle \chi\left(\frac{n+1}{b}\cdot\frac{j}{n/b}-j\right)} \\
			& \ge \frac{\displaystyle f\left(\frac{j}{(n+1)/b}\right)\chi\left(\frac{n+1}{b}\cdot\frac{j}{n/b}-j\right)}{\displaystyle \chi\left(\frac{n+1}{b}\cdot\frac{j}{n/b}-j\right)}=f\left(\frac{j}{(n+1)/b}\right).
		\end{split}
	\end{equation*}
	This completes the proof.
\end{proof}
Moreover, a similar result for the case $j \ge \frac{n}{2}$ can be established. 
\begin{lemma}
\label{lemma4.2}
	Let $a$, $b \in \mathbb{R}$, with $0<a<b$ and let $f:\mathbb{R}\rightarrow\mathbb{R}_{0}^{+}$ be a bounded function. For any $j \in \{0,1,\dots,n\}$, $n \in \mathbb{N}$, with $j \ge \frac{n}{2}$, we have:
	\begin{equation*}
		\begin{split}
			&(i) \ \ S_{n/b}^{\chi}(f)\left(\frac{j+1}{(n+1)/b}\right) \ge f\left(\frac{j}{n/b}\right), \\
			&(ii) \ \ S_{(n+1)/b}^{\chi}(f)\left(\frac{j}{n/b}\right) \ge f\left(\frac{j+1}{(n+1)/b}\right).
		\end{split}
	\end{equation*}
\end{lemma}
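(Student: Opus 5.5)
The plan mirrors the proof of Lemma \ref{lemma4.1}: in each case I locate the evaluation point in a half-cell of the relevant grid and use Lemma \ref{lemma1} to identify the denominator as a single kernel value. Then I bound the numerator from below by one of its terms.

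For $(i)$, set $w=n/b$ and $x=\frac{j+1}{(n+1)/b}=\frac{b(j+1)}{n+1}$. I will show that
\[
x\in\left[\frac{j}{w},\frac{j}{w}+\frac{1}{2w}\right].
\]
Multiplying by $w$, this means $\frac{n(j+1)}{n+1}\in[j,j+\tfrac12]$. Note that $\frac{n(j+1)}{n+1}-j=\frac{n-j}{n+1}$.
\begin{itemize}
\item It is non-negative because $j\le n$.
\item It is at most $\tfrac12$ exactly when $2n-2j\le n+1$, i.e. $j\ge\frac{n-1}{2}$, which follows from $j\ge\frac n2$.
\end{itemize}
By the first identity of Lemma \ref{lemma1}, $\bigvee_{k}\chi(wx-k)=\chi(wx-j)$. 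Keeping only the term $k=j$ in the numerator gives
\[
S_{w}^{\chi}(f)(x)\ge \frac{f(j/w)\,\chi(wx-j)}{\chi(wx-j)}=f\left(\frac{j}{n/b}\right).
\]
The cancellation is legitimate because $\chi(wx-j)\ge a_\chi>0$, by $(\chi2)$ and Lemma \ref{lemma2.2}.

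For $(ii)$, set $w'=(n+1)/b$ and $y=\frac{j}{n/b}$. I need
\[
y\in\left[\frac{j+1}{w'}-\frac{1}{2w'},\frac{j+1}{w'}\right],
\]
i.e. $\frac{(n+1)j}{n}\in[j+\tfrac12,j+1]$. Here $\frac{(n+1)j}{n}-j=\frac jn$.
\begin{itemize}
\item It is at most $1$ because $j\le n$.
\item It is at least $\tfrac12$ because $j\ge \frac n2$.
\end{itemize}
Applying the second identity of Lemma \ref{lemma1} with index $j$ (so that $\frac{j}{w'}+\frac{1}{2w'}=\frac{j+1}{w'}-\frac{1}{2w'}$), the denominator equals $\chi(w'y-(j+1))$. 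Keeping the term $k=j+1$ in the numerator yields $S_{w'}^{\chi}(f)(y)\ge f\left(\frac{j+1}{(n+1)/b}\right)$.

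The only delicate step is checking these interval memberships, in particular which endpoint inequality uses $j\ge n/2$ and which uses $j\le n$; the boundary cases $j=n/2$ and $j=n$ are covered because Lemma \ref{lemma1} uses closed intervals. Non-negativity of $f$ is used implicitly, since it guarantees that dropping the other terms of the supremum only lowers the numerator.
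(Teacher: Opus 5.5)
Your proof is correct and is essentially the paper's. The paper places $\frac{j+1}{(n+1)/b}$ in $\bigl[\frac{j}{n/b},\frac{j}{n/b}+\frac{1/2}{n/b}\bigr]$ and $\frac{j}{n/b}$ in $\bigl[\frac{j}{(n+1)/b}+\frac{1/2}{(n+1)/b},\frac{j}{(n+1)/b}+\frac{1}{(n+1)/b}\bigr]$, then argues exactly as in Lemma \ref{lemma4.1}; your computations $\frac{n-j}{n+1}\in[0,\frac12]$ and $\frac{j}{n}\in[\frac12,1]$ supply the verification the paper leaves implicit. One small correction to your closing remark: $\bigvee_k A_k\ge A_j$ holds for any real family, so non-negativity of $f$ is not what justifies dropping terms from the supremum.
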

\begin{proof}
	The condition $j \ge \frac{n}{2}$ implies that
	\begin{equation*}
		\frac{j+1}{(n+1)/b}\in \left[\frac{j}{n/b},\frac{j}{n/b}+\frac{1/2}{n/b}\right],
	\end{equation*}
	and
	\begin{equation*}
		\frac{j}{n/b} \in \left[\frac{j}{(n+1)/b}+\frac{1/2}{(n+1)/b},\frac{j}{(n+1)/b}+\frac{1}{(n+1)/b}\right].
	\end{equation*}
	Thus, by reasoning as in the proof of Lemma \ref{lemma4.1}, we immediately obtain the thesis. 
\end{proof}
\begin{lemma}
	\label{lemma4.3}
	Let $a$, $b \in \mathbb{R}$ with $0<a<b$, and let $f \in C_{+}(\mathbb{R})$ with the property that there exists a constant $C>0$ such that for any $k$, $n \in \mathbb{N}$, $n \ge 1$, $k \le n$, with $a \le \frac{k}{(n+1)/b}<\frac{k}{n/b}\le b$, we have:
	\begin{equation*}
		\left\lvert f\left(\frac{k}{(n+1)/b}\right)-f\left(\frac{k}{n/b}\right)\right\rvert \le \frac{bC}{n}.
	\end{equation*}  
	Then $f$ is a Lipschitz function on $[a,b]$ with Lipschitz constant $\frac{bC}{a}$, i.e., 
	\begin{equation*}
		\lvert f(x)-f(y)\rvert \le \frac{bC}{a}\cdot \lvert x-y \rvert, \quad x,y \in [a,b].
	\end{equation*}
\end{lemma}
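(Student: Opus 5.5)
The proof works with the change of variables $x=\frac{k}{n/b}=\frac{kb}{n}$, which turns the hypothesis into a statement about consecutive Farey-type points. For fixed $k$, the points $\frac{kb}{n+1}<\frac{kb}{n}$ differ by $\frac{kb}{n(n+1)}$. Whenever both lie in $[a,b]$ we have $\frac{kb}{n+1}\ge a$, so $\frac{k}{n+1}\ge \frac{a}{b}$. Hence the gap satisfies $\frac{kb}{n(n+1)}\ge \frac{a}{n}$, that is, $\frac{1}{n}\le \frac{1}{a}\cdot\frac{kb}{n(n+1)}$. The hypothesis therefore gives
\[
\left\lvert f\left(\tfrac{kb}{n+1}\right)-f\left(\tfrac{kb}{n}\right)\right\rvert\le \frac{bC}{n}\le \frac{bC}{a}\left(\tfrac{kb}{n}-\tfrac{kb}{n+1}\right).
\]
So on each such elementary pair the Lipschitz inequality with constant $L:=\frac{bC}{a}$ already holds. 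The task is to chain these pairs together.

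Fix $x<y$ in $[a,b]$ and, by continuity of $f$, assume first that $x,y\in(a,b]$ are such that $y/x$ is rational, or simply approximate. Given $x<y$, I choose a large integer $k$ and consider the points $\frac{kb}{m}$ for $m=m_1,m_1-1,\dots,m_2$, where $m_1=\lceil kb/x\rceil$ and $m_2=\lceil kb/y\rceil$. These points increase, are consecutive in $m$, and stay inside $[a,b]$ as long as $x\ge a$ and $y\le b$. The exceptional case is $m_2$ with $kb/m_2\le b$, which requires $m_2\ge k$; this holds since $y\le b$. The condition $k\le n$ in the hypothesis is likewise guaranteed by $kb/n\le b$. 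Summing the elementary inequalities telescopically gives
\[
\left\lvert f\left(\tfrac{kb}{m_1}\right)-f\left(\tfrac{kb}{m_2}\right)\right\rvert\le L\left(\tfrac{kb}{m_2}-\tfrac{kb}{m_1}\right).
\]

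Now let $k\to\infty$. The mesh $\frac{kb}{m(m+1)}\approx \frac{x^2}{kb}$ tends to $0$, so $\frac{kb}{m_1}\to x$ and $\frac{kb}{m_2}\to y$, with $\frac{kb}{m_1}\le x$ and $\frac{kb}{m_2}\le y$. Care is needed at the left endpoint so that $\frac{kb}{m_1}\ge a$. To handle this I take $m_1=\lfloor kb/x\rfloor$, which gives $\frac{kb}{m_1}\ge x\ge a$, and similarly $m_2=\lceil kb/y\rceil$, which gives $\frac{kb}{m_2}\le y\le b$. The endpoints then lie in $[x,y]$ and converge to $x$ and $y$. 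If $m_1<m_2$ fails, the interval is tiny, and that case is handled by the limit anyway. Continuity of $f$ then yields $|f(x)-f(y)|\le L|x-y|$.

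The main obstacle is the bookkeeping that every consecutive pair $\left(\frac{kb}{m+1},\frac{kb}{m}\right)$ used in the chain satisfies all the side conditions of the hypothesis: $n=m\ge1$, $k\le n$, and both points in $[a,b]$. Choosing the floor and ceiling as above so that the endpoints stay inside $[x,y]\subseteq[a,b]$ is what resolves it. The conversion $\frac{1}{n}\le\frac{1}{a}\cdot\text{gap}$ via the constraint $\frac{kb}{n+1}\ge a$ is the key estimate producing the constant $\frac{bC}{a}$.
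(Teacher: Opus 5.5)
Your proof is correct and follows essentially the same route as the paper. Both fix the numerator, chain consecutive nodes $\frac{kb}{m+1}<\frac{kb}{m}$, and convert $\frac{bC}{m}$ into $\frac{bC}{a}$ times the gap via $\frac{kb}{m+1}\ge a$. Both then telescope and reach the endpoints by continuity.

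The only difference is bookkeeping. The paper fixes the denominator $m$ through uniform continuity and then brackets $x$ and $y$ between nodes $\frac{k_0 b}{m+p}$. You instead fix a large $k$ with $m_1=\lfloor kb/x\rfloor$, $m_2=\lceil kb/y\rceil$, which keeps every node inside $[x,y]$ automatically. It also avoids needing an integer $k_0$ with $\frac{ym}{b}\le k_0\le\frac{y(m+1)}{b}$, which need not exist for an arbitrary $m$. Just delete the superseded ceiling choice of $m_1$, and read ``if $m_1<m_2$ fails'' as ``if $m_2<m_1$ fails''.
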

\begin{proof}
	Let $n \in \mathbb{N}$, with $n \ge 1$, and let $x$, $y \in [a,b]$ be arbitrarily fixed. Since $f \in C_{+}(\mathbb{R})$, it follows that there exists $m \in \mathbb{N}$ such that
	\begin{equation}
		\label{equazione15}
		\lvert f(u)-f(v) \rvert \le \frac{1}{n}, \ \text{for every} \ u, v \in [a,b]: \lvert u-v \rvert \le \frac{1}{m/b}.
	\end{equation}
	Furthermore, noting that the limit $k_{0}/((m+l+1)/b) \rightarrow0$ as $l \rightarrow+\infty$, and recalling that $x >0$, there exist $k_{0}$, $l_{0} \in \mathbb{N}$, with $l_{0}\ge 1$, such that
	\begin{equation*}
		\frac{k_{0}}{(m+l_{0}+1)/b}\le x \le \frac{k_{0}}{(m+l_{0})/b}<\dots<\frac{k_{0}}{(m+1)/b}\le y \le \frac{k_{0}}{m/b}.
	\end{equation*} 
	Since $y \le b$, we have $k_{0}\le m+1$. Moreover, note that 
	\begin{equation*}
		y -\frac{k_{0}}{(m+1)/b}\le \frac{k_{0}}{m/b}-\frac{k_{0}}{(m+1)/b}=\frac{k_{0}}{m(m+1)/b}\le \frac{1}{m/b}.
	\end{equation*} 
	Therefore, by (\ref{equazione15}), we obtain
	\begin{equation*}
		\left\lvert f(y)-f\left(\frac{k_{0}}{(m+1)/b}\right)\right\rvert\le \frac{1}{n}.
	\end{equation*}
	Similarly, we have
	\begin{equation*}
		\left\lvert f(x)-f\left(\frac{k_{0}}{(m+l_{0})/b}\right)\right\rvert \le \frac{1}{n}.
	\end{equation*}
	Then, we can write what follows:
	\begin{equation*}
		\begin{split}
			\lvert f(x)-f(y) \rvert &\le \left\lvert f(x)-f\left(\frac{k_{0}}{(m+l_{0})/b}\right)\right\rvert + \sum_{p=1}^{l_{0}-1}\left\lvert f\left(\frac{k_{0}}{(m+p+1)/b}\right)-f\left(\frac{k_{0}}{(m+p)/b}\right)\right\rvert \\
			& + \left \lvert f(y)-f\left(\frac{k_{0}}{(m+1)/b}\right) \right \rvert
		\end{split}
	\end{equation*}
	\begin{equation}
		\label{equazione16}
		\hskip-1.4cm \le \frac{2}{n}+\sum_{p=1}^{l_{0}-1}\left\lvert f\left(\frac{k_{0}}{(m+p+1)/b}\right)-f\left(\frac{k_{0}}{(m+p)/b}\right)\right\rvert.
	\end{equation}
	Now, by the assumption, we get
	\begin{equation*}
		\left\lvert f\left(\frac{k_{0}}{(m+p+1)/b}\right)-f\left(\frac{k_{0}}{(m+p)/b}\right)\right\rvert \le \frac{bC}{m+p},
	\end{equation*}
	for any $p \in \{1,2,\dots,l_{0}-1\}$. Moreover, we have
	\begin{equation*}
		\begin{split}
			\frac{k_{0}}{(m+p)/b}-\frac{k_{0}}{(m+p+1)/b}&=\frac{bk_{0}}{(m+p)(m+p+1)}=\frac{b}{m+p}\cdot \frac{k_{0}}{(m+p+1)/b}\cdot\frac{1}{b} \\
			& \ge \frac{b}{m+p}\cdot \frac{a}{b},
		\end{split}
	\end{equation*}
	for every $p \in \{1,2,\dots,l_{0}-1\}$. From the last two inequalities, it follows that
	\begin{equation*}
		\left\lvert f\left(\frac{k_{0}}{(m+p+1)/b}\right)-f\left(\frac{k_{0}}{(m+p)/b}\right)\right\rvert \le \left(\frac{k_{0}}{(m+p)/b}-\frac{k_{0}}{(m+p+1)/b}\right)\cdot \frac{bC}{a}.
	\end{equation*}
	Using the previous inequality in (\ref{equazione16}), we obtain
	\begin{equation*}
		\begin{split}
			\lvert f(x)-f(y) \rvert &\le \frac{2}{n}+\sum_{p=1}^{l_{0}-1}\left\lvert f\left(\frac{k_{0}}{(m+p+1)/b}\right)-f\left(\frac{k_{0}}{(m+p)/b}\right)\right\rvert \\
			& \le \frac{2}{n}+\frac{bC}{a} \sum_{p=1}^{l_{0}-1}\left(\frac{k_{0}}{(m+p)/b}-\frac{k_{0}}{(m+p+1)/b}\right) \\
			&=\frac{2}{n}+\frac{bC}{a}\left(\frac{k_{0}}{(m+1)/b}-\frac{k_{0}}{(m+l_{0})/b}\right) \le \frac{2}{n}+\frac{bC}{a}(y-x).
		\end{split}
	\end{equation*}
	Finally, since 
	\begin{equation*}
		\lvert f(x)-f(y) \rvert \le \frac{2}{n}+\frac{bC}{a}\lvert x-y\rvert,
	\end{equation*}
	for every $n \in \mathbb{N}$, letting $n \rightarrow+\infty$, we obtain
	\begin{equation*}
		\lvert f(x)-f(y) \rvert \le \frac{bC}{a}\lvert x-y\rvert.
	\end{equation*}
	Since $x$ and $y$ are arbitrary points in $[a,b]$, the proof is complete.  
\end{proof}
\begin{lemma}
	\label{lemma4.4}
	Let $a$, $b \in \mathbb{R}$ with $0<a<b$, and let $f \in C_{+}(\mathbb{R})$ with the property that there exists a constant $C>0$ such that for any $k$, $n \in \mathbb{N}$, $n \ge 1$, $k \le n$, with $a \le \frac{k}{n/b}<\frac{k+1}{(n+1)/b}\le b$, we have:
	\begin{equation*}
		\left\lvert f\left(\frac{k}{n/b}\right)-f\left(\frac{k+1}{(n+1)/b}\right)\right\rvert \le \frac{bC}{n}.
	\end{equation*}  
	Then $f$ is a Lipschitz function on any interval $[a,c]$ with Lipschitz constant $\frac{bC}{b-c}$, i.e., 
	\begin{equation*}
		\lvert f(x)-f(y)\rvert \le \frac{bC}{b-c}\cdot \lvert x-y \rvert, \quad x,y \in [a,c],
	\end{equation*}
	where $c \in [a,b)$ is arbitrarily chosen. 
\end{lemma}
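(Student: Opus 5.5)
The idea is to repeat the telescoping argument of Lemma \ref{lemma4.3}, but along a different family of nodes. Here the two points in the hypothesis are $\frac{k}{n/b}$ and $\frac{k+1}{(n+1)/b}$. Passing from the first to the second increases both $n$ and $k$ by one, so the quantity $r:=n-k$ stays fixed. For a fixed integer $r\ge 1$, set
\begin{equation*}
t_{n}:=\frac{n-r}{n/b}=b-\frac{br}{n}, \qquad n> r .
\end{equation*}
Then $t_{n+1}=\frac{(n-r)+1}{(n+1)/b}$, so each pair $(t_{n},t_{n+1})$ is exactly a pair covered by the assumption, with $k=n-r\le n$. The sequence $(t_{n})_{n}$ increases to $b$, and
\begin{equation*}
t_{n+1}-t_{n}=\frac{br}{n(n+1)} .
\end{equation*}

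The key step is converting $\frac{bC}{n}$ into a multiple of the increment $t_{n+1}-t_{n}$. By the identity above,
\begin{equation*}
\frac{bC}{n}=(t_{n+1}-t_{n})\,\frac{C(n+1)}{r}.
\end{equation*}
Now suppose $t_{n+1}\le c<b$. Then $\frac{br}{n+1}=b-t_{n+1}\ge b-c$, which gives $\frac{n+1}{r}\le \frac{b}{b-c}$. Hence, whenever $a\le t_{n}<t_{n+1}\le c$,
\begin{equation*}
\lvert f(t_{n})-f(t_{n+1})\rvert \le \frac{bC}{b-c}\,(t_{n+1}-t_{n}).
\end{equation*}
The condition $t_{n+1}\le c$ is what makes the constant depend on $b-c$. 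Near $b$ the nodes accumulate and $r/n$ degenerates, which is why only $[a,c]$ with $c<b$ can be reached.

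For the covering step, fix $x<y$ in $[a,c]$ and $\eta>0$, and use uniform continuity to get $\delta>0$ with $\lvert f(u)-f(v)\rvert\le\eta$ whenever $\lvert u-v\rvert\le\delta$. Consider the nodes $t_{n}$ lying in $[x,c]$; for these, $n\le \frac{br}{b-c}$ up to a constant, so the mesh $\frac{br}{n(n+1)}$ is of order $\frac{(b-x)^{2}}{br}$. Choosing $r$ large therefore makes the mesh on $[x,c]$ smaller than both $\delta$ and $y-x$.

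Next pick $N_{1}<N_{2}$ with $t_{N_{1}}\le x\le t_{N_{1}+1}$ and $t_{N_{2}}\le y\le t_{N_{2}+1}$. Telescope from $t_{N_{1}+1}$ to $t_{N_{2}}$. All nodes in this chain lie in $[x,y]\subset[a,c]$, so the inequality above applies to every consecutive pair. Together with the endpoint errors at $x$ and $y$, this gives
\begin{equation*}
\lvert f(x)-f(y)\rvert\le 2\eta+\frac{bC}{b-c}\,(t_{N_{2}}-t_{N_{1}+1})\le 2\eta+\frac{bC}{b-c}\,(y-x).
\end{equation*}
Letting $\eta\to0$ yields the claim.

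The only delicate points are bookkeeping: checking that $t_{N_{1}+1}\ge a$ and $t_{N_{2}}\le c$, and handling the degenerate case in which no node falls strictly between $x$ and $y$. The latter is ruled out by taking $r$ large, so that the mesh is below $y-x$.
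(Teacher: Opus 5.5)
Your argument is correct and is essentially the paper's proof. The paper telescopes along the same chain $\frac{k_0+p}{(m+p)/b}$, $p=0,1,\dots$, in which $m-k_0$ (your $r$) is fixed, and converts $\frac{bC}{m+p}$ into a multiple of the node spacing using the fact that the nodes stay below $c$. The differences are minor: bounding via $t_{n+1}\le c$ gives $\frac{bC}{b-c}$ directly instead of the paper's $\frac{bC(m+1)}{(b-c)m}$ followed by $m\to+\infty$, and fixing $r$ first and then locating $N_1$ by monotonicity of $(t_n)$ makes the existence of the bracketing nodes automatic (just note that the mesh bound comes from the lower bound $n\ge \frac{br}{b-x}$, not from $n\le\frac{br}{b-c}$).
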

\begin{proof}
	Let $n\in \mathbb{N}$, with $n \ge 1$, and fix arbitrary $x$, $y \in [a,c]$, where $a\le c<b$. As in the proof of Lemma \ref{lemma4.3}, there exists a sufficiently large $m \in \mathbb{N}$ such that (\ref{equazione15}) holds. Since $x>0$ and $\frac{k_{0}+l+1}{(m+l+1)/b}\rightarrow b>y$ as $l\rightarrow+\infty$, there exist $k_{0}$, $l_{0}\in \mathbb{N}$, with $l_{0}\ge 1$, such that
	\begin{equation*}
		\frac{k_{0}}{m/b}\le x \le \frac{k_{0}+1}{(m+1)/b}\le \dots \le \frac{k_{0}+l_{0}}{(m+l_{0})/b}\le y \le \frac{k_{0}+l_{0}+1}{(m+l_{0}+1)/b}.
	\end{equation*}
	Observe that $k_{0}\le m$. Moreover, note that
	\begin{equation*}
		\frac{k_{0}+1}{(m+1)/b}-x \le \frac{k_{0}+1}{(m+1)/b} -\frac{k_{0}}{m/b}=\frac{m-k_{0}}{m(m+1)/b}<\frac{1}{m/b}.
	\end{equation*}
	Therefore, by (\ref{equazione15}), we obtain
	\begin{equation*}
		\left\lvert f(x)-f\left(\frac{k_{0}+1}{(m+1)/b}\right) \right\rvert \le \frac{1}{n}.
	\end{equation*}
	Similarly, we have
	\begin{equation*}
		\left\lvert f(y)-f\left(\frac{k_{0}+l_{0}}{(m+l_{0})/b}\right)\right\rvert \le \frac{1}{n}.
	\end{equation*}
	Then, we can write what follows:
	\begin{equation*}
		\begin{split}
			\lvert f(x)-f(y) \rvert &\le \left\lvert f(x)-f\left(\frac{k_{0}+1}{(m+1)/b}\right)\right\rvert + \sum_{p=1}^{l_{0}-1}\left\lvert f\left(\frac{k_{0}+p}{(m+p)/b}\right)-f\left(\frac{k_{0}+p+1}{(m+p+1)/b}\right)\right\rvert \\
			& + \left \lvert f(y)-f\left(\frac{k_{0}+l_{0}}{(m+l_{0})/b}\right) \right \rvert
		\end{split}
	\end{equation*}
	\begin{equation}
	\label{equazione17}
		\hskip-1.4cm \le \frac{2}{n}+\sum_{p=1}^{l_{0}-1}\left\lvert f\left(\frac{k_{0}+p}{(m+p)/b}\right)-f\left(\frac{k_{0}+p+1}{(m+p+1)/b}\right)\right\rvert.
	\end{equation}
	Now, by the assumption, we get
	\begin{equation*}
		\left\lvert f\left(\frac{k_{0}+p}{(m+p)/b}\right)-f\left(\frac{k_{0}+p+1}{(m+p+1)/b}\right)\right\rvert \le \frac{bC}{m+p},
	\end{equation*}
	for any $p \in \{1,2,\dots,l_{0}-1\}$. Moreover, we have
	\begin{equation}
	\label{equazione18}
		\begin{split}
			\frac{k_{0}+p+1}{(m+p+1)/b}-\frac{k_{0}+p}{(m+p)/b}&=\frac{b(m-k_{0})}{(m+p)(m+p+1)}=\frac{b}{m+p}\cdot \frac{m-k_{0}}{m+p+1},
		\end{split}
	\end{equation}
	for every $p \in \{1,2,\dots,l_{0}-1\}$. Now, since 
	\begin{equation*}
		\frac{k_{0}+p}{(m+p)/b}<y \le c,
	\end{equation*}
	we get 
	\begin{equation*}
		k_{0}<\frac{mc}{b}+p\left(\frac{c}{b}-1\right),
	\end{equation*}
	which implies 
	\begin{equation*}
		\frac{m-k_{0}}{m+p+1}\ge \frac{m-\frac{mc}{b}-p\left(\frac{c}{b}-1\right)}{m+p+1}=\frac{\left(1-\frac{c}{b}\right)(m+p)}{m+p+1}\ge \left(1-\frac{c}{b}\right)\cdot\frac{m}{m+1}.
	\end{equation*}
	Using the previous inequality in (\ref{equazione18}), we obtain
	\begin{equation*}
		\frac{k_{0}+p+1}{(m+p+1)/b}-\frac{k_{0}+p}{(m+p)/b} \ge \frac{b}{m+p}\cdot \left(1-\frac{c}{b}\right)\cdot\frac{m}{m+1}.
	\end{equation*}
	From the previous inequality, it follows that
	\begin{equation*}
		\begin{split}
			&\left\lvert f\left(\frac{k_{0}+p}{(m+p)/b}\right)-f\left(\frac{k_{0}+p+1}{(m+p+1)/b}\right)\right\rvert \le \frac{bC}{m+p} \\
			& \le C \cdot \frac{m+1}{m}\cdot\frac{b}{b-c}\left(\frac{k_{0}+p+1}{(m+p+1)/b}-\frac{k_{0}+p}{(m+p)/b}\right).
		\end{split}
	\end{equation*}
	Hence, estimating $\lvert f(x)-f(y)\rvert$ as in (\ref{equazione17}), we have
	\begin{equation*}
		\begin{split}
			\lvert f(x)-f(y) \rvert & \le \frac{2}{n}+\sum_{p=1}^{l_{0}-1}\left\lvert f\left(\frac{k_{0}+p}{(m+p)/b}\right)-f\left(\frac{k_{0}+p+1}{(m+p+1)/b}\right)\right\rvert \\
			&\le \frac{2}{n}+\frac{Cb(m+1)}{m(b-c)}\cdot \sum_{p=1}^{l_{0}-1}\left(\frac{k_{0}+p+1}{(m+p+1)/b}-\frac{k_{0}+p}{(m+p)/b}\right) \\
			&=\frac{2}{n}+\frac{Cb(m+1)}{m(b-c)}\cdot\left(\frac{k_{0}+l_{0}}{(m+l_{0})/b}-\frac{k_{0}+1}{(m+1)/b}\right) \\
			&\le \frac{2}{n}+\frac{Cb(m+1)}{m(b-c)}(y-x).
		\end{split}
	\end{equation*}
	Without loss of generality, we may assume that $m \ge n$. Therefore, by letting $n \rightarrow +\infty$, and hence $m\rightarrow+\infty$, we finally obtain
	\begin{equation*}
		\lvert f(x)-f(y) \rvert \le \frac{Cb}{b-c}\cdot\lvert x-y \rvert.
	\end{equation*}
	Since $x$ and $y$ are arbitrary points in $[a,c]$, the proof is complete. 
\end{proof}
Now, we are in a position to prove a local inverse theorem for the operators $S_{w}^{\chi}$ in (\ref{equazione4saturazione}).
\begin{teorema}
\label{teorema4.1}
	Let $a$, $b \in \mathbb{R}$, with $a<b$, and let $f \in C_{+}(\mathbb{R})$. Suppose that there exists a constant $M>0$, independent of $w>0$ but depending on $f$, $a$ and $b$, such that 
	\begin{equation*}
		\sup_{x \in [a,b]} \lvert S_{w}^{\chi}(f)(x)-f(x)\rvert \le \frac{M}{w},
	\end{equation*}
	for all sufficiently large $w>0$. Then:
	\begin{description}
		\item[$(i)$] if $0<a<b$, then, for every $c \in [a,b)$, $f$ is a Lipschitz function on $[a,c]$ with Lipschitz constant $Mb\left(\frac{1}{2a}+\frac{1}{b-c}\right)$;
		\item[$(ii)$] if $a<b<0$, then, for every $c \in (a,b]$, $f$ is a Lipschitz function on $[c,b]$ with Lipschitz constant $Ma\left(\frac{1}{2b}-\frac{1}{c-a}\right)$.  
	\end{description}
\end{teorema}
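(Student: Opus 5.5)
The plan for $(i)$ is to combine Lemmas \ref{lemma4.1}--\ref{lemma4.4} with the hypothesis evaluated at $w=n/b$ and $w=(n+1)/b$. The approach is to split the nodes $\frac{j}{n/b}$ according to whether $j\le n/2$ or $j\ge n/2$. For $0<a<b$, the relevant nodes in $[a,b]$ satisfy $j\le n$, so both cases occur.

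\emph{Case $j\le n/2$.} Assume $n$ is large enough that the hypothesis applies at $w=n/b$ and at $w=(n+1)/b$, and that both points $\frac{j}{(n+1)/b}$ and $\frac{j}{n/b}$ lie in $[a,b]$.
\begin{itemize}
\item Lemma \ref{lemma4.1}$(i)$ gives $f\bigl(\frac{j}{n/b}\bigr)\le S_{n/b}^{\chi}(f)\bigl(\frac{j}{(n+1)/b}\bigr)\le f\bigl(\frac{j}{(n+1)/b}\bigr)+\frac{Mb}{n}$.
\item Lemma \ref{lemma4.1}$(ii)$ gives $f\bigl(\frac{j}{(n+1)/b}\bigr)\le f\bigl(\frac{j}{n/b}\bigr)+\frac{Mb}{n+1}$.
\end{itemize}
Hence $\bigl|f\bigl(\frac{j}{(n+1)/b}\bigr)-f\bigl(\frac{j}{n/b}\bigr)\bigr|\le \frac{Mb}{n}$.

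\emph{Case $j\ge n/2$.} In the same way, Lemma \ref{lemma4.2} yields $\bigl|f\bigl(\frac{j}{n/b}\bigr)-f\bigl(\frac{j+1}{(n+1)/b}\bigr)\bigr|\le \frac{Mb}{n}$.

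There is a mismatch with the earlier lemmas. Lemmas \ref{lemma4.3} and \ref{lemma4.4} assume these bounds for all admissible $k,n$, whereas here we only obtain them for $n$ large and in one half-range each. I expect this bookkeeping to be the main obstacle. The way around it is to inspect their proofs. In each, the chains used start at index $m$, and $m$ may be taken arbitrarily large (larger than any threshold $n_0$). The position of a node relative to $n/2$ is governed by whether it lies below or above $b/2$: $\frac{j}{n/b}\le b/2$ exactly when $j\le n/2$. So the plan is to rerun the chain arguments, concatenating a ray-type chain (Lemma \ref{lemma4.3}, nodes $\frac{k_0}{(m+p)/b}$ with $k_0$ fixed) and a diagonal chain (Lemma \ref{lemma4.4}, nodes $\frac{k_0+p}{(m+p)/b}$), each used where its increments are controlled. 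Concretely, for $x<y$ in $[a,c]$ one would proceed as follows.
\begin{itemize}
\item Cover the portion of $[x,y]$ lying in $[a,b/2]$ using the Lemma \ref{lemma4.3} chain with constant $C=M$. Its increments satisfy $j\le n/2$ because the nodes lie below $b/2$ (up to $O(1/m)$ boundary effects, absorbed into the $2/n$ terms). The proof of Lemma \ref{lemma4.3} then gives the Lipschitz constant $\frac{Mb}{a}$. The refined bound $\frac{Mb}{2a}$ comes from using the sharper increment bound $\frac{Mb}{n+1}$ together with the two-sided estimate, or possibly from noting that each node spacing is at least $\frac{b}{m+p}\cdot\frac{a}{b}$ while the error accrues only on one side. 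I would verify which of these produces the factor $\frac12$, and otherwise settle for a constant of the same form.
\item Cover the portion of $[x,y]$ in $[b/2,c]$ with the Lemma \ref{lemma4.4} chain. It gives the constant $\frac{Mb}{b-c}$.
\end{itemize}
Since the Lipschitz constant of the union of the two adjacent intervals is at most the sum of the two constants, this yields $Mb\bigl(\frac{1}{2a}+\frac{1}{b-c}\bigr)$. If $[a,c]$ lies entirely on one side of $b/2$, only one of the two pieces is needed.

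For $(ii)$, set $g(x)=f(-x)$. Since $\chi$ is even by $(\chi4)$, $S_w^{\chi}(g)(x)=S_w^{\chi}(f)(-x)$ after reindexing $k\mapsto -k$. Hence $g$ satisfies the hypothesis on $[-b,-a]$ with $0<-b<-a$. Applying $(i)$ with the interval endpoints $-b$ and $-a$, the truncation point $-c$, and $-a$ playing the role of $b$, gives Lipschitz continuity of $g$ on $[-b,-c]$, that is, of $f$ on $[c,b]$. The constant becomes $M(-a)\bigl(\frac{1}{2(-b)}+\frac{1}{-a+c}\bigr)=Ma\bigl(\frac{1}{2b}-\frac{1}{c-a}\bigr)$, as claimed.
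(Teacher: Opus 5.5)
Your architecture is the paper's: increment bounds from Lemmas \ref{lemma4.1} and \ref{lemma4.2} at $w=n/b$ and $w=(n+1)/b$, the ray chain of Lemma \ref{lemma4.3} below $b/2$, the diagonal chain of Lemma \ref{lemma4.4} above $b/2$, and the reflection $g(x)=f(-x)$ for $(ii)$. Your bookkeeping worry is harmless. In both lemma proofs the hypothesis is used only for consecutive pairs lying in $[x,y]$ with indices at least $m$, and $m$ can be taken as large as needed.

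The genuine gap is the constant in $(i)$. Along the ray chain, divide the increment bound $\frac{Mb}{m+p}$ by the spacing $\frac{k_0b}{(m+p)(m+p+1)}$. The ratio is $\frac{Mb}{t}$ at the node $t=\frac{k_0b}{m+p+1}$. So on $[a,b/2]$ the chain gives $\frac{Mb}{a}$ and nothing better. Neither of your candidate mechanisms halves this. Replacing $\frac{Mb}{n}$ by $\frac{Mb}{n+1}$ is invisible in the limit, and the two-sided estimate is what produced the absolute value in the first place. In case A you end with $\frac{Mb}{a}$; for $a=0.1$, $c=0.2$, $b=1$ that is $10M$ against the claimed $6.25M$. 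Settling for a constant ``of the same form'' therefore does not prove the statement as written.

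The paper gets the factor $\frac12$ by applying Lemma \ref{lemma4.3} with $b$ replaced by $b/2$ and $C=M$, but that application is not legitimate. At scale $b/2$ the lemma concerns the pairs $\frac{kb}{2(n+1)},\frac{kb}{2n}$ and needs the bound $\frac{bM}{2n}$. Your increment estimate gives only $\frac{bM}{n}$ for such a pair, since the pair spans the two steps $\frac{kb}{2n+2},\frac{kb}{2n+1},\frac{kb}{2n}$. That amounts to $C=2M$, which returns $\frac{bM}{a}$. Nor can Lemma \ref{lemma4.1} be rerun at scale $b/2$, because that needs the nodes below $b/4$.

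In fact the intermediate claim is false when $\frac{b}{4}<a<\frac{b}{2}$. Take $\chi=\chi_{b}$ and $f\in C_{+}(\mathbb{R})$ positive and equal to $\alpha-2Mx$ on a neighbourhood of $[a,b]$. Only nodes within $\frac{3}{2w}$ of $x$ contribute, and a direct check gives $\lvert S_w^{\chi}(f)-f\rvert\le M/w$ on $[a,b]$ for large $w$. Yet $f$ has Lipschitz constant $2M>\frac{bM}{2a}$ on $[a,b/2]$.

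What rescues the theorem is that its constant is always at least $2M$:
\begin{itemize}
\item if $a\le b/2$, both $\frac{b}{2a}$ and $\frac{b}{b-c}$ are $\ge 1$;
\item if $a>b/2$, then $\frac{b}{b-c}>2$.
\end{itemize}
The bound $2M$ follows directly from the hypothesis, without the $n,n+1$ machinery. For $x,y\in[a,b]$ with $h=\lvert x-y\rvert$ small, put $k=\lfloor y/(2h)\rfloor$ and $w=k/y$. Then $\lvert wx-k\rvert=wh\le\frac12$, so Lemma \ref{lemma1} gives $\bigvee_{i}\chi(wx-i)=\chi(wx-k)$. Hence $S_w^{\chi}(f)(x)\ge f(y)$ and
\[
f(y)-f(x)\le \frac{M}{w}\le \frac{2Mh}{1-2h/a}.
\]
Exchange $x$ and $y$, subdivide an arbitrary segment of $[a,b]$, and let the mesh tend to $0$. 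This gives $\lvert f(x)-f(y)\rvert\le 2M\lvert x-y\rvert$ on all of $[a,b]$, hence $(i)$, and your reflection then gives $(ii)$.
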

\begin{proof}
	First, we consider the case $0<a<b$. Let $c \in [a,b)$ be fixed. We distinguish three sub-cases: A) $c \le \frac{b}{2}$; B) $a \ge \frac{b}{2}$; C) $a<\frac{b}{2}<c$. \newline Case A) In this case, we prove that $f$ is Lipschitz continuous on $[a,\frac{b}{2}]$. To this end, let $n \in \mathbb{N}$ be sufficiently large, and let $k \in \mathbb{N}$ such that 
	\begin{equation*}
		a \le \frac{k}{(n+1)/b}<\frac{k}{n/b}\le \frac{b}{2}.
	\end{equation*}
	Now, if $f\left(\frac{k}{(n+1)/b}\right)\le f\left(\frac{k}{n/b}\right)$, applying Lemma \ref{lemma4.1} $(i)$ (note that $k \le \frac{n}{2}$), then we get
	\begin{equation*}
		S_{n/b}^{\chi}(f)\left(\frac{k}{(n+1)/b}\right)-f\left(\frac{k}{(n+1)/b}\right)\ge f\left(\frac{k}{n/b}\right)-f\left(\frac{k}{(n+1)/b}\right)\ge 0.
	\end{equation*}
	By the assumption, we have
	\begin{equation*}
		S_{n/b}^{\chi}(f)\left(\frac{k}{(n+1)/b}\right)-f\left(\frac{k}{(n+1)/b}\right) \le \frac{bM}{n},
	\end{equation*}
	and hence we obtain
	\begin{equation*}
		0 \le f\left(\frac{k}{n/b}\right)-f\left(\frac{k}{(n+1)/b}\right) \le \frac{bM}{n}.
	\end{equation*}
	Similarly, if  $f\left(\frac{k}{(n+1)/b}\right)\ge f\left(\frac{k}{n/b}\right)$, by Lemma \ref{lemma4.1} $(ii)$, we have
	\begin{equation*}
		S_{(n+1)/b}^{\chi}(f)\left(\frac{k}{n/b}\right)-f\left(\frac{k}{n/b}\right)\ge f\left(\frac{k}{(n+1)/b}\right)-f\left(\frac{k}{n/b}\right)\ge 0.
	\end{equation*}
	The assumption implies that 
	\begin{equation*}
		S_{(n+1)/b}^{\chi}(f)\left(\frac{k}{n/b}\right)-f\left(\frac{k}{n/b}\right) \le \frac{bM}{n+1},
	\end{equation*}
	and hence we obtain
	\begin{equation*}
		0 \le f\left(\frac{k}{(n+1)/b}\right)-f\left(\frac{k}{n/b}\right) \le \frac{bM}{n+1}\le \frac{bM}{n}.
	\end{equation*}
	In conclusion it follows that 
	\begin{equation*}
		\left \lvert f\left(\frac{k}{(n+1)/b}\right)-f\left(\frac{k}{n/b}\right) \right \rvert \le \frac{bM}{n}.
	\end{equation*}
	Therefore, applying Lemma \ref{lemma4.3} with $C:=M$ and $b:=\frac{b}{2}$, we obtain
	\begin{equation*}
		\lvert f(x)-f(y)\rvert \le \frac{bM}{2a}\lvert x-y \rvert,
	\end{equation*}
	for every $x$, $y \in [a,\frac{b}{2}]$. Hence, $f$ is Lipschitz continuous on $[a,\frac{b}{2}]$ with Lipschitz constant $bM/2a$. Since $c \le \frac{b}{2}$, it follows that $f$ is also Lipschitz continuous on $[a,c]$ with the same Lipschitz constant. \newline Case B) Let $n \in \mathbb{N}$ be sufficiently large, and let $k \in \mathbb{N}$ such that
	\begin{equation*}
		a \le \frac{k}{n/b}<\frac{k+1}{(n+1)/b}\le b.
	\end{equation*}
	Note that the assumption $a \ge \frac{b}{2}$ implies that $k \ge \frac{n}{2}$. Moreover, we have $k \le n$.
	\newline Now, if $f\left(\frac{k}{n/b}\right)\ge f\left(\frac{k+1}{(n+1)/b}\right)$, applying Lemma \ref{lemma4.2} $(i)$, then we get
	\begin{equation*}
		S_{n/b}^{\chi}(f)\left(\frac{k+1}{(n+1)/b}\right)-f\left(\frac{k+1}{(n+1)/b}\right)\ge f\left(\frac{k}{n/b}\right)-f\left(\frac{k+1}{(n+1)/b}\right)\ge 0.
	\end{equation*}
	By the assumption, we have
	\begin{equation*}
		S_{n/b}^{\chi}(f)\left(\frac{k+1}{(n+1)/b}\right)-f\left(\frac{k+1}{(n+1)/b}\right) \le \frac{bM}{n},
	\end{equation*}
	and hence we obtain
	\begin{equation*}
		0 \le f\left(\frac{k}{n/b}\right)-f\left(\frac{k+1}{(n+1)/b}\right) \le \frac{bM}{n}.
	\end{equation*}
	Similarly, if  $f\left(\frac{k}{n/b}\right)\le f\left(\frac{k+1}{(n+1)/b}\right)$, by Lemma \ref{lemma4.2} $(ii)$, we have
	\begin{equation*}
		S_{(n+1)/b}^{\chi}(f)\left(\frac{k}{n/b}\right)-f\left(\frac{k}{n/b}\right)\ge f\left(\frac{k+1}{(n+1)/b}\right)-f\left(\frac{k}{n/b}\right)\ge 0.
	\end{equation*}
	The assumption implies that 
	\begin{equation*}
		S_{(n+1)/b}^{\chi}(f)\left(\frac{k}{n/b}\right)-f\left(\frac{k}{n/b}\right) \le \frac{bM}{n+1},
	\end{equation*}
	and hence we obtain
	\begin{equation*}
		0 \le f\left(\frac{k+1}{(n+1)/b}\right)-f\left(\frac{k}{n/b}\right) \le \frac{bM}{n+1}\le \frac{bM}{n}.
	\end{equation*}
	In conclusion it follows that 
	\begin{equation*}
		\left \lvert f\left(\frac{k}{n/b}\right)-f\left(\frac{k+1}{(n+1)/b}\right) \right \rvert \le \frac{bM}{n}.
	\end{equation*}
	Therefore, applying Lemma \ref{lemma4.4} with $C:=M$, we obtain
	\begin{equation*}
		\lvert f(x)-f(y)\rvert \le \frac{bM}{b-c}\lvert x-y \rvert,
	\end{equation*}
	for every $x$, $y \in [a,c]$. Hence, $f$ is Lipschitz continuous on $[a,c]$ with Lipschitz constant $bM/(b-c)$. \newline Case C) From Case A), we conclude that $f$ is Lipschitz continuous on $[a,\frac{b}{2}]$ with Lipschitz constant $bM/2a$. Applying the result of Case B) with $a:=\frac{b}{2}$, we deduce that $f$ is Lipschitz continuous on $[\frac{b}{2},c]$ with Lipschitz constant $bM/(b-c)$. Now, it is easy to verify that $f$ is a Lipschitz function on $[a,c]$ with Lipschitz constant $Mb(1/2a+1/(b-c))$. This completes the proof of the case $0<a<b$. \newline It remains to discuss the case $a<b<0$. Let us define the function 
	\begin{equation*}
		\begin{split}
			&g:\ \mathbb{R}\rightarrow\mathbb{R}^{+}_{0}, \\
			&g(x):=f(-x).
		\end{split}
	\end{equation*} 
	Since $\chi(wx-k)=\chi(w(-x)+k)$ for every $x \in \mathbb{R}$ and $k \in \mathbb{Z}$ by $(\chi4)$, we obtain:
	\begin{equation*}
		\begin{split}
			S_{w}^{\chi}(f)(x)&=\frac{\displaystyle \bigvee_{k \in \mathbb{Z}}f\left(\frac{k}{w}\right)\chi(wx-k)}{\displaystyle \bigvee_{k \in \mathbb{Z}}\chi(wx-k)}=\frac{\displaystyle \bigvee_{k \in \mathbb{Z}}g\left(\frac{-k}{w}\right)\chi(w(-x)+k)}{\displaystyle \bigvee_{k \in \mathbb{Z}}\chi(w(-x)+k)}
			\\ & =\frac{\displaystyle \bigvee_{k \in \mathbb{Z}}g\left(\frac{k}{w}\right)\chi(w(-x)-k)}{\displaystyle \bigvee_{k \in \mathbb{Z}}\chi(w(-x)-k)}=S_{w}^{\chi}(g)(-x),
		\end{split}
	\end{equation*}
	for every $x \in \mathbb{R}$. Therefore, it follows that 
	\begin{equation*}
		\lvert S_{w}^{\chi}(f)(x)-f(x)\rvert = \lvert S_{w}^{\chi}(g)(-x)-g(-x)\rvert.
	\end{equation*}
	Applying the result of the previous case with $f:=g$, $a:=-b$, $b:=-a$, and $c:=-c$, we conclude that $g$ is Lipschitz continuous on $[-b,-c]$ with Lipschitz constant $M(-a)(1/(2(-b))+1/(-a+c))$. Consequently, $f$ is a Lipschitz function on $[c,b]$ with Lipschitz constant $Ma(1/2b-1/(c-a))$. This completes the proof.
\end{proof}
\begin{remark}
As in the case of Theorem \ref{teoremadisaturazione} (see Remark \ref{Remark1}), the above theorem can be generalized to functions of arbitrary sign. More precisely, let $f \in C(\mathbb{R})$ be a function such that
\begin{equation*}
	\sup_{x \in [a,b]}\lvert S_{w}^{\chi}(f(\cdot)-m)(x)+m-f(x)\rvert=\mathcal{O}\left(\frac{1}{w}\right) \quad \text{as} \ w \rightarrow+\infty,
\end{equation*}
where $m=\inf_{x \in \mathbb{R}}f(x)$. Then, if $0<a<b$, it follows that $f(\cdot)-m \in \text{Lip}([a,c])$, and hence $f \in \text{Lip}([a,c])$, for every $c \in [a,b)$. Similarly, if $a<b<0$,  then $f(\cdot)-m \in \text{Lip}([c,b])$, and hence $f \in \text{Lip}([c,b])$, for every $c \in (a,b]$. 
\end{remark}
\section{Strong localization result}
\label{Sezione5}
In \cite{CoroianuGal2012}, the authors established a strong localization result for truncated max-product sampling operators based on specific kernels, such as the sinc/Whittaker and Fejér kernels. In this section, we show that an analogous result holds for the (truncated) max-product generalized sampling operators in (\ref{operatoriequazione2}) with $I=[0,1]$, provided that the even centered bell-shaped kernel function is strictly positive. More precisely, we prove that, by working with strictly positive kernels satisfying conditions $(\chi1)$ and $(\chi3)-(\chi5)$, if $f$ and $g$ are bounded functions with strictly positive lower bounds and coincide on a given subinterval $[a,b] \subset [0,1]$, then for sufficiently large $n \in \mathbb{N}$,
\begin{equation*}
S_{n}^{\chi}(f)(x)=\frac{\displaystyle\bigvee_{k=0}^{n}f\left(\frac{k}{n}\right)\chi(nx-k)}{\displaystyle \bigvee_{k=0}^{n}\chi(nx-k)}, \quad x \in [0,1],
\end{equation*}
and 
\begin{equation*}
S_{n}^{\chi}(g)(x)=\frac{\displaystyle\bigvee_{k=0}^{n}g\left(\frac{k}{n}\right)\chi(nx-k)}{\displaystyle \bigvee_{k=0}^{n}\chi(nx-k)}, \quad x \in [0,1],
\end{equation*}
coincide on every subinterval $[c,d]$ sufficiently close to $[a,b]$. We point out that strong localization results constitute a well-known feature, along with good rates of approximation, of the max-product versions of several discrete operators, such as Bernstein operators \cite{CoroianuGal2014bis} and several interpolation operators \cite{CoroianuGal2013bis}. Theorem \ref{Teorema5.1} extends the class of max-product operators exhibiting good localization properties. 
\newline To establish this, we first need the following crucial lemma. 
\begin{lemma}
\label{lemma5.1}
Let $\chi$ be a strictly positive kernel satisfying assumptions $(\chi1)$ and $(\chi3)-(\chi5)$, and let $f:[0,1]\rightarrow \mathbb{R}^{+}$ be a bounded function such that
\begin{equation*}
m_{f}:=\inf_{x \in [0,1]} f(x)>0.
\end{equation*}
Furthermore, let $a,b \in [0,1]$, with $0<a<b<1$, be fixed. Then, for all $c$, $d \in [a,b]$ satisfying $a<c<d<b$, there exists $N_{0} \in \mathbb{N}$, depending only on $a$, $b$, $c$, $d$, $f$ and $\chi$, such that for any $n \ge N_{0}$ and $x \in [c,d]$, we have:
\begin{equation*}
\bigvee_{k=0}^{n}\chi(nx-k)f\left(\frac{k}{n}\right)=\bigvee_{k \in I_{n,x}}\chi(nx-k)f\left(\frac{k}{n}\right),
\end{equation*}
where $I_{n,x}:=\{k=0,1,\dots,n: j_{x}-\sqrt{n}\le k \le j_{x}+\sqrt{n}\}$, and $j_{x}\in \{0,1,\dots,n-1\}$ is such that $x \in \left[\frac{j_{x}}{n},\frac{j_{x}}{n}+\frac{1}{n}\right]$.
\end{lemma}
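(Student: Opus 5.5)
The plan is to compare the terms with $k$ far from $j_x$ against one fixed term with $k$ close to $j_x$. Concretely, I will show that every term with $|k-j_x|>\sqrt{n}$ is strictly smaller than the term with $k=j_x$. Since the maximum over $\{0,\dots,n\}$ equals the larger of the maximum over $I_{n,x}$ and the maximum over its complement, and $j_x\in I_{n,x}$, this gives the identity.

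\textbf{Lower bound for the term $k=j_x$.} Since $x\in[j_x/n,(j_x+1)/n]$, we have $0\le nx-j_x\le 1$. By $(\chi4)$ and $(\chi5)$,
\begin{equation*}
\chi(nx-j_x)f\left(\frac{j_x}{n}\right)\ge \chi(1)\,m_f .
\end{equation*}
The constant $\chi(1)$ is strictly positive because $\chi$ is strictly positive. This is exactly why strict positivity of $\chi$ replaces $(\chi2)$ here.

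\textbf{Upper bound for the far terms.} Take $|k-j_x|>\sqrt{n}$. Then
\begin{equation*}
|nx-k|\ge |k-j_x|-|nx-j_x|>\sqrt{n}-1 .
\end{equation*}
By $(\chi1)$ with exponent $\beta>0$ (and, for $\sqrt n -1\ge 1$, the monotonicity in $(\chi5)$), we get
\begin{equation*}
\chi(nx-k)f\left(\frac{k}{n}\right)\le \|f\|_\infty\,\chi(\sqrt{n}-1)\le \|f\|_\infty\, \frac{m_\beta(\chi)}{(\sqrt{n}-1)^{\beta}} .
\end{equation*}
Alternatively, $(\chi3)$ alone gives $\chi(\sqrt n-1)\to0$, which avoids using the moment at all.

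\textbf{Choice of $N_0$.} Choose $N_0$ so that, for $n\ge N_0$,
\begin{equation*}
\|f\|_\infty\,\chi(\sqrt{n}-1)<\chi(1)\,m_f .
\end{equation*}
This is possible because $\chi(t)\to 0$ as $t\to+\infty$. For such $n$, every term with $k\notin I_{n,x}$ is strictly below the term $k=j_x$, so dropping those terms does not change the maximum.

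It remains to check that $j_x$ itself belongs to $I_{n,x}$ and to $\{0,\dots,n-1\}$. For $x\in[c,d]\subset(0,1)$ one can always pick such a $j_x$. I would also enlarge $N_0$ so that $\sqrt n\ge 2$; this makes the monotonicity step legitimate.

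Note that the constant $N_0$ actually depends only on $f$ and $\chi$; the dependence on $a,b,c,d$ is harmless. The main (mild) obstacle is making the strict comparison uniform in $x$. This is handled by using the bound $\chi(1)$, which is uniform in $x$, rather than the true maximizer $\chi(nx-j_x)$. The rest is routine.
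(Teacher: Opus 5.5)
Your proof is correct, but it is organized differently from the paper's.

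\textbf{The paper's route.} It first shows that $na<j_x<nb$ for large $n$, which is where $a,b,c,d$ enter. It then treats $k<j_x-\sqrt n$ and $k>j_x+\sqrt n$ separately. Each of these is split according to whether $x$ lies in the left or the right half of $\left[\frac{j_x}{n},\frac{j_x+1}{n}\right]$. In each subcase the far term is compared with the term of the \emph{nearest} node ($j_x$ or $j_x+1$), which is bounded below by $\chi(\frac12)m_f$. The comparison goes through the ratio $\frac{\chi(1/2)}{\chi(\sqrt n)}\cdot\frac{m_f}{M_f}\to+\infty$. This divides by $\chi(nx-k)$, so it uses $\chi>0$ everywhere.

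\textbf{Your route.} You compare every far term with the single term $k=j_x$, accepting the weaker lower bound $\chi(1)m_f$. You handle both sides at once via $|nx-k|>\sqrt n-1$. This removes the four-way case split and the ratio.

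\textbf{What your version buys.} The gain is generality.
\begin{itemize}
\item[(1)] Strict positivity is used only to get $\chi(1)>0$.
\item[(2)] Nothing about the location of $x$ is used. So the identity holds for all $x\in[0,1]$, with $N_0$ depending only on $m_f$, $\|f\|_\infty$ and $\chi$.
\item[(3)] Since the rest of the proof of Theorem~\ref{Teorema5.1} only needs the denominator bound $\bigvee_{k=0}^{n}\chi(nx-k)\ge\chi(1)>0$, your argument also covers kernels that vanish somewhere but have $\chi(1)>0$. An example is the B-spline $\chi_b$, where $\chi_b(1)=\frac18$; the paper explicitly excludes it.
\end{itemize}

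\textbf{What the paper's version buys.} The nearest-node comparison gives only marginally better constants. The restriction to $[c,d]\subset(a,b)$ is genuinely needed only in Theorem~\ref{Teorema5.1}, to guarantee $\frac kn\in[a,b]$ for $k\in I_{n,x}$.
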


\begin{proof}
Let $x \in [c,d]$, with $a<c<d<b$, and let $n \in \mathbb{N}$ be fixed. Clearly, there exists a suitable index $j_{x}\in \{0,1,\dots,n-1\}$, depending on both $x$ and $n$, such that $x \in \left[\frac{j_{x}}{n},\frac{j_{x}}{n}+\frac{1}{n}\right]$. Now, let us choose $n_{0} \in \mathbb{N}$ sufficiently large such that $n_{0} > \max\left\{\frac{1}{c-a},\frac{1}{b-d}\right\}$. Then, for every $n \ge n_{0}$, since $x \in [c,d] \cap \left[\frac{j_{x}}{n},\frac{j_{x}}{n}+\frac{1}{n}\right]$, and $a<c<d<b$, it follows that $a< \frac{j_{x}}{n}<b$. Indeed, suppose that there exists $n \ge n_{0}$ such that $\frac{j_{x}}{n} \le a$. Then, $c-\frac{j_{x}}{n} \ge c-a>\frac{1}{n}$, which implies $d>c>\frac{j_{x}}{n}+\frac{1}{n}>\frac{j_{x}}{n}$, in contradiction with the fact that $x \in [c,d] \cap \left[\frac{j_{x}}{n}, \frac{j_{x}}{n}+\frac{1}{n}\right]$. A similar contradiction arises if $\frac{j_{x}}{n} \ge b$. \newline Therefore, we obtain $na <j_{x}<nb$ for all $n \ge n_{0}$. This implies that for every $x \in [c,d]$, there exists $\alpha_{x}\in [a,b]$ such that $j_{x}=n\alpha_{x}$. It is worth noting that $n_{0}$ does not depend on $x$. 
\newline In what follows, we consider the sequence $(a_{n})_{n \in \mathbb{N}}$, defined by $a_{n}:=\sqrt{n}$. Clearly, there exists $n_{1}\in \mathbb{N}$ such that $na-a_{n}>0$, for all $n \ge n_{1}$. Let now $n \ge \max\{n_{0},n_{1}\}$. Note that, since $j_{x}>na$, it follows that $j_{x}-a_{n}>0$. 
\newline In order to obtain the desired conclusion, fix $k \in \{0,1,\dots,n\}\setminus I_{n,x}$; that is, either $k < j_{x}-a_{n}$ or $k>j_{x}+a_{n}$. 
\newline If $k<j_{x}-a_{n}$, we consider two cases: $(i)$ $ x \in \left[\frac{j_{x}}{n},\frac{j_{x}}{n}+\frac{1}{2n}\right]$, and $(ii)$ $ x \in \left[\frac{j_{x}}{n}+\frac{1}{2n}, \frac{j_{x}}{n}+\frac{1}{n}\right]$. We begin with the first one, i.e., $(i)$ $x \in \left[\frac{j_{x}}{n}, \frac{j_{x}}{n}+\frac{1}{2n}\right]$. Since $0 \le nx-j_{x}\le \frac{1}{2}$, by property $(\chi5)$, we get 
\begin{equation}
\label{equazione19}
\chi(nx-j_{x}) \ge \chi\left(\frac{1}{2}\right).
\end{equation}
Moreover, since $j_{x}\le nx \le j_{x}+\frac{1}{2}$ and $j_{x}-k>a_{n}$, it follows that $nx-k \ge j_{x}-k >a_{n}>0$. Thus, using property $(\chi5)$ again, we obtain $\chi(nx-k) \le \chi(a_{n})$, and hence: 
\begin{equation}
\label{equazione20}
	\frac{1}{\chi(nx-k)}\ge \frac{1}{\chi(a_{n})}. 
\end{equation} 
Now, by exploiting (\ref{equazione19}) and (\ref{equazione20}), and recalling that $\chi(y)>0$ for all $y \in \mathbb{R}$, we can write:
\begin{equation*}
\begin{split}
\frac{\chi(nx-j_{x})f\left(\frac{j_{x}}{n}\right)}{\chi(nx-k)f\left(\frac{k}{n}\right)} \ge \frac{\chi\left(\frac{1}{2}\right) f\left(\frac{j_{x}}{n}\right)}{\chi(nx-k)f\left(\frac{k}{n}\right)} \ge \frac{\chi\left(\frac{1}{2}\right)}{\chi(a_{n})}\cdot \frac{f\left(\frac{j_{x}}{n}\right)}{f\left(\frac{k}{n}\right)} \ge \frac{\chi\left(\frac{1}{2}\right)}{\chi(a_{n})}\cdot \frac{m_{f}}{M_{f}},
\end{split}
\end{equation*}
where $m_{f}$ and $M_{f}$ denote the infimum and the supremum of $f$ on $[0,1]$, respectively. Recall that, by assumption, both $m_{f}$ and $M_{f}$ are strictly positive real numbers. Since $\lim_{y\rightarrow+\infty}\chi(y)=0^{+}$ by $(\chi3)$, it follows that
\begin{equation*}
\lim_{n\rightarrow+\infty}\frac{\chi\left(\frac{1}{2}\right)}{\chi(a_{n})}\cdot\frac{m_{f}}{M_{f}}=+\infty.
\end{equation*}
Hence, there exists $n_{2} \in \mathbb{N}$, independent of $x \in [c,d]$ but depending on $f$ and $\chi$, with $n_{2} \ge \max\{n_{0},n_{1}\}$, such that
\begin{equation}
\label{equazione21}
\frac{\chi(nx-j_{x})f\left(\frac{j_{x}}{n}\right)}{\chi(nx-k)f\left(\frac{k}{n}\right)}>1, \  x \in [c,d],
\end{equation} 
for every $n \ge n_{2}$, and every $k \in \{0,1,\dots,n\}$ such that $k <j_{x}-a_{n}$ (i.e., $k \notin I_{n,x}$). \newline Now, consider the case $(ii)$ $x \in \left[\frac{j_{x}}{n}+\frac{1}{2n}, \frac{j_{x}}{n}+\frac{1}{n}\right]$. Since $-\frac{1}{2}\le nx-(j_{x}+1)\le 0$, by properties $(\chi4)$ and $(\chi5)$, we get:
\begin{equation*}
\chi(nx-(j_{x}+1)) \ge \chi\left(\frac{1}{2}\right). 
\end{equation*} 
Moreover, since $j_{x}+\frac{1}{2} \le nx \le j_{x}+1$ and $j_{x}-k>a_{n}$, it is immediate that $nx-k \ge j_{x}-k+\frac{1}{2} > a_{n}+\frac{1}{2} >a_{n}>0$, which implies that:
\begin{equation*}
\chi(nx-k) \le \chi\left(j_{x}-k+\frac{1}{2}\right) \le \chi\left(a_{n}+\frac{1}{2}\right) \le \chi(a_{n}).
\end{equation*}
Therefore, reasoning as in the proof of case $(i)$, it follows that for sufficiently large $n \in \mathbb{N}$ we have:
\begin{equation}
\label{equazione22}
\frac{\chi(nx-(j_{x}+1))f\left(\frac{j_{x}+1}{n}\right)}{\chi(nx-k)f\left(\frac{k}{n}\right)}>1, \ \ x \in [c,d], \ k \in \{0,1,\dots,n\}: k<j_{x}-a_{n}.
\end{equation}
From (\ref{equazione21}) and (\ref{equazione22}), we conclude that there exists $N_{1} \in \mathbb{N}$, depending only on $a$, $b$, $c$, $d$, $f$ and $\chi$, such that
\begin{equation*}
\frac{\max\left\{\chi(nx-j_{x})f\left(\frac{j_{x}}{n}\right),\chi(nx-(j_{x}+1))f\left(\frac{j_{x}+1}{n}\right)\right\}}{\chi(nx-k)f\left(\frac{k}{n}\right)}>1,
\end{equation*}
for all $x \in [c,d]$, $n \ge N_{1}$, and $k \in \{0,1,\dots,n\}$ such that $k < j_{x}-a_{n}$.  
\newline Now, consider the case $k>j_{x}+a_{n}$. As in the first part of the proof, we distinguish between two subcases: $(i^{*})$ $x \in \left[\frac{j_{x}}{n}, \frac{j_{x}}{n}+\frac{1}{2n}\right]$, and $(ii^{*})$ $x \in \left[\frac{j_{x}}{n}+\frac{1}{2n}, \frac{j_{x}}{n}+\frac{1}{n}\right]$. Proceeding with a similar argument and with slightly modifications with respect to the case $k<j_{x}-a_{n}$, it turns out that there exists $N_{2} \in \mathbb{N}$, depending only on $a$, $b$, $c$, $d$, $f$ and $\chi$, such that
\begin{equation*}
\frac{\max\left\{\chi(nx-j_{x})f\left(\frac{j_{x}}{n}\right), \chi(nx-(j_{x}+1))f\left(\frac{j_{x}+1}{n}\right)\right\}}{\chi(nx-k)f\left(\frac{k}{n}\right)}>1,
\end{equation*} 
for all $x \in [c,d]$, $n \ge N_{2}$, and $k \in \{0,1,\dots,n\}$ such that $k>j_{x}+a_{n}$ (i.e., $k \notin I_{n,x}$). \newline Summarizing, if we set $N_{0}:=\max\{N_{1},N_{2}\}$, then
\begin{equation*}
\chi(nx-k)f\left(\frac{k}{n}\right)<\max\left\{\chi(nx-j_{x})f\left(\frac{j_{x}}{n}\right), \chi(nx-(j_{x}+1))f\left(\frac{j_{x}+1}{n}\right)\right\},
\end{equation*}
for every $x \in [c,d]$, $n \ge N_{0}$, and $k \in \{0,1,\dots,n\}$ such that $k < j_{x}-a_{n}$ or $k>j_{x}+a_{n}$. This completes the proof. 
\end{proof}
Now, we can prove a strong localization result for the (truncated) max-product generalized sampling operators based on strictly positive, even, centered bell-shaped kernel functions. 
\begin{teorema}
\label{Teorema5.1}
Let $\chi$ be a strictly positive kernel satisfying assumptions $(\chi1)$ and $(\chi3)-(\chi5)$. Furthermore, let $f$, $g:[0,1] \rightarrow \mathbb{R}^{+}$ be two bounded functions, both bounded below by strictly positive constants. Suppose there exist $a$, $b \in [0,1]$, with $0<a<b<1$, such that $f(x)=g(x)$ for all $x \in [a,b]$. Then, for any $c$, $d \in [a,b]$ with $a<c<d<b$, there exists $\overline{n} \in \mathbb{N}$, depending only on $a$, $b$, $c$, $d$, $f$, $g$, and $\chi$, such that 
\begin{equation*}
S_{n}^{\chi}(f)(x)=S_{n}^{\chi}(g)(x),
\end{equation*}
for all $x \in [c,d]$ and all $n \ge \overline{n}$. 
\end{teorema}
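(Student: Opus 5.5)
The plan is to reduce the statement directly to Lemma \ref{lemma5.1}, applied separately to $f$ and to $g$. Both functions satisfy its hypotheses: they are bounded and have strictly positive infima $m_f, m_g>0$, and $\chi$ is strictly positive and satisfies $(\chi1)$ and $(\chi3)-(\chi5)$. Fix $c,d$ with $a<c<d<b$. Lemma \ref{lemma5.1} then gives two integers $N_0^{f}$ and $N_0^{g}$, depending only on $a,b,c,d,\chi$ and on $f$ or $g$ respectively. For $n\ge \max\{N_0^f,N_0^g\}$ and every $x\in[c,d]$ we get
\begin{equation*}
\bigvee_{k=0}^{n}\chi(nx-k)f\left(\frac{k}{n}\right)=\bigvee_{k\in I_{n,x}}\chi(nx-k)f\left(\frac{k}{n}\right),
\end{equation*}
and the analogous identity for $g$, with the same index set $I_{n,x}$. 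This holds because $I_{n,x}$ depends only on $n$ and $x$ through $j_x$, and not on the function.

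Next we make sure the localized numerators of $S_n^\chi(f)(x)$ and $S_n^\chi(g)(x)$ only sample the region where $f=g$. For $k\in I_{n,x}$ we have $|k-j_x|\le\sqrt n$, and $j_x/n\in[x-1/n,x]\subset[c-1/n,d]$. Hence
\begin{equation*}
\frac{k}{n}\in\left[c-\frac1n-\frac{1}{\sqrt n},\ d+\frac{1}{\sqrt n}\right].
\end{equation*}
Choose $n_3$ so large that $\frac1n+\frac1{\sqrt n}<c-a$ and $\frac1{\sqrt n}<b-d$ for $n\ge n_3$; this depends only on $a,b,c,d$. Then every $k\in I_{n,x}$ satisfies $k/n\in[a,b]$, so $f(k/n)=g(k/n)$ for all such $k$. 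Therefore the two localized maxima coincide.

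The denominators $\bigvee_{k=0}^n\chi(nx-k)$ are identical for $f$ and $g$ and strictly positive, since $\chi>0$. Setting $\overline n:=\max\{N_0^f,N_0^g,n_3\}$ therefore gives $S_n^\chi(f)(x)=S_n^\chi(g)(x)$ for all $x\in[c,d]$ and $n\ge\overline n$, with $\overline n$ depending only on $a,b,c,d,f,g,\chi$.

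There is no genuine obstacle, since the substantive work is already contained in Lemma \ref{lemma5.1}. The only point needing care is the second step: one must check that the window of width $\sqrt n$ around $j_x$, rescaled by $1/n$, shrinks to zero, so that it eventually lies inside $[a,b]$ uniformly in $x\in[c,d]$. One should also note explicitly that the index set $I_{n,x}$ is the same for both functions, so the two applications of the lemma can be compared term by term.
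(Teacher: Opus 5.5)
Your proposal is correct and follows the paper's proof. You apply Lemma~\ref{lemma5.1} to $f$ and $g$ separately, and you show that $\{k/n : k\in I_{n,x}\}\subset[a,b]$ uniformly in $x\in[c,d]$ once $\frac{1}{n}+\frac{1}{\sqrt n}<c-a$ and $\frac{1}{\sqrt n}<b-d$. You then conclude from $f=g$ on $[a,b]$ and the common positive denominator, exactly as the paper does; your remark that $I_{n,x}$ is the same set for both functions just makes explicit a step the paper uses implicitly.
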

\begin{proof}
Let us fix $c,d \in [a,b]$, with $a<c<d<b$. By applying Lemma \ref{lemma5.1} to the function $f$, we obtain that there exists $N_{0} \in \mathbb{N}$ such that
\begin{equation*}
\bigvee_{k=0}^{n}\chi(nx-k) f\left(\frac{k}{n}\right)=\bigvee_{k \in I_{n,x}}\chi(nx-k)f\left(\frac{k}{n}\right),
\end{equation*}
for any $x \in [c,d]$, and $n \ge N_{0}$, where the set $I_{n,x}$ is defined as in the previous lemma. Recall that $N_{0}$ does not depend on $x$. \newline Now, let $x \in [c,d]$ and let $n \in \mathbb{N}$ with $n \ge N_{0}$ be fixed. Suppose there exists $k \in I_{n,x}$ (i.e., $j_{x}-\sqrt{n} \le k \le j_{x}+\sqrt{n}$, where $j_{x} \in \{0,1,\dots,n-1\}$ is such that $x \in \left[\frac{j_{x}}{n},\frac{j_{x}}{n}+\frac{1}{n}\right]$) with $\frac{k}{n} \notin [c,d]$. Then either $\frac{k}{n}<c$ or $\frac{k}{n}>d$. In the first case, we can write:
\begin{equation*}
0<c-\frac{k}{n} \le x-\frac{k}{n}  \le \frac{j_{x}+1}{n}-\frac{k}{n} \le \frac{\sqrt{n}+1}{n}. 
\end{equation*}
Since $\lim_{n\rightarrow+\infty} \frac{\sqrt{n}+1}{n}=0$, for sufficiently large $n$ we have $\frac{\sqrt{n}+1}{n}<c-a$, which implies that $\frac{k}{n} \in [a,c]$. Similarly, if $\frac{k}{n} >d$, then we have:
\begin{equation*}
0<\frac{k}{n}-d \le \frac{k}{n}-x \le \frac{k}{n}-\frac{j_{x}}{n} \le \frac{1}{\sqrt{n}}.
\end{equation*}
Observing that $\frac{1}{\sqrt{n}}\rightarrow 0 $ as $n \rightarrow +\infty$, for sufficiently large $n$ we have $\frac{1}{\sqrt{n}} < b-d$, and hence $\frac{k}{n} \in [d,b]$. Therefore, we conclude that there exists $\overline{N_{1}} \in \mathbb{N}$, independent of $x \in [c,d]$, but depending only on $a$, $b$, $c$, $d$, $f$, and $\chi$, such that
\begin{equation*}
\bigvee_{k=0}^{n}\chi(nx-k) f\left(\frac{k}{n}\right)= \bigvee_{k \in I_{n,x}} \chi(nx-k)f\left(\frac{k}{n}\right), \ \ n \ge \overline{N_{1}},
\end{equation*} 
and, moreover, $\frac{k}{n} \in [a,b]$ for all $k \in I_{n,x}$. Consequently, we can write:
\begin{equation*}
S_{n}^{\chi}(f)(x)=\frac{\displaystyle \bigvee_{k \in I_{n,x}}f\left(\frac{k}{n}\right)\chi(nx-k)}{\displaystyle \bigvee_{k=0}^{n}\chi(nx-k)},
\end{equation*}
with $\frac{k}{n} \in [a,b]$ for each $k \in I_{n,x}$, for all $x \in [c,d]$ and $n \in \mathbb{N}$ with $n \ge \overline{N_{1}}$. \newline Now, proceeding with the same arguments for the function $g$ as in the case of $f$, it follows that there exists $\overline{N_{2}} \in \mathbb{N}$, depending only on $a$, $b$, $c$, $d$, $g$ and $\chi$, such that
\begin{equation*}
S_{n}^{\chi}(g)(x)=\frac{\displaystyle \bigvee_{k \in I_{n,x}} g\left(\frac{k}{n}\right)\chi(nx-k)}{\displaystyle \bigvee_{k=0}^{n}\chi(nx-k)},
\end{equation*}
with $\frac{k}{n} \in [a,b]$ for each $k \in I_{n,x}$, for all $x \in [c,d]$ and $n \in \mathbb{N}$ with $n \ge \overline{N_{2}}$. Setting $\overline{n}:=\max\{\overline{N_{1}},\overline{N_{2}}\} \in \mathbb{N}$, and noting that, by assumption, $f\left(\frac{k}{n}\right)=g\left(\frac{k}{n}\right)$ for all $k \in I_{n,x}$, for any $x \in [c,d]$ and $n \ge \overline{n}$, we obtain the desired conclusion. 
\end{proof}
As a direct consequence of the above strong localization result, we can deduce a local shape- preserving property of the (truncated) max-product generalized sampling operators associated with strictly positive, even, centered bell-shaped kernel functions. More precisely, Corollary \ref{Corollario5.1} states that, for sufficiently large $n\in \mathbb{N}$, $S_{n}^{\chi}(f)$ can approximate strictly positive functions that are constant on subintervals of $[0,1]$ with a high degree of accuracy. Namely, if $f$ is a strictly positive function that is constant on some intervals $[a_{i},b_{i}]\subset[0,1]$, $i=1,\dots,p$, then, for sufficiently large $n$, $S_{n}^{\chi}(f)$ attains the same constant values on subintervals sufficiently close to each $[a_{i},b_{i}]$, $i=1,\dots,p$. Indeed, we can establish the following.
\begin{cor}
\label{Corollario5.1}
Under the same assumptions on $\chi$ as in the previous theorem, let $f:[0,1] \rightarrow \mathbb{R}^{+}$ be a bounded function with a strictly positive lower bound. Suppose, moreover, that there exist $a$, $b \in [0,1]$, with $0<a<b<1$, such that $f(x)=\alpha \in \mathbb{R}^{+}$ for every $x \in [a,b]$, i.e., $f$ is constant on $[a,b]$. Then, for any $c$, $d \in [a,b]$ with $a<c<d<b$, there exists $\overline{n} \in \mathbb{N}$, depending only on $a$, $b$, $c$, $d$, $f$ and $\chi$, such that $S_{n}^{\chi}(f)(x)=\alpha$ for all $x \in [c,d]$ and all $n \ge \overline{n}$.  
\end{cor}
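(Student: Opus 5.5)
The plan is to reduce Corollary \ref{Corollario5.1} to Theorem \ref{Teorema5.1} by comparing $f$ with the constant function $g\equiv\alpha$ on $[0,1]$. This $g$ is bounded, and since $\alpha>0$ it has a strictly positive lower bound. It agrees with $f$ on $[a,b]$ by assumption. So all the hypotheses of Theorem \ref{Teorema5.1} hold for the pair $f,g$ with the same $a,b$ and the same kernel $\chi$.

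Given $c,d$ with $a<c<d<b$, Theorem \ref{Teorema5.1} yields $\overline{n}$ such that $S_{n}^{\chi}(f)(x)=S_{n}^{\chi}(g)(x)$ for all $x\in[c,d]$ and $n\ge\overline{n}$. In the theorem, $\overline{n}$ depends on $a,b,c,d,f,g,\chi$. Here $g$ is determined by $\alpha$, and $\alpha$ is determined by $f$, so $\overline{n}$ depends only on $a,b,c,d,f,\chi$, as the corollary requires.

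It remains to check that the truncated operator reproduces positive constants. For every $x\in[0,1]$,
\begin{equation*}
S_{n}^{\chi}(g)(x)=\frac{\bigvee_{k=0}^{n}\alpha\,\chi(nx-k)}{\bigvee_{k=0}^{n}\chi(nx-k)}=\alpha,
\end{equation*}
since $\alpha>0$ can be pulled out of the maximum. The denominator is strictly positive because $\chi$ is strictly positive, so the quotient is well defined. Combining this with the previous step gives $S_{n}^{\chi}(f)(x)=\alpha$ for all $x\in[c,d]$ and $n\ge\overline{n}$.

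The argument has no real obstacle. The only points that need checking are the positivity of the constant $\alpha$ and the positivity of the denominator, both of which were verified above.
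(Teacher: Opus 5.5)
Your proposal is correct and follows essentially the same approach as the paper. The paper also compares $f$ with the constant $g\equiv\alpha$, notes that $S_{n}^{\chi}(g)=\alpha$ on $[0,1]$, and concludes from Theorem \ref{Teorema5.1}. Your additional checks (strict positivity of the denominator, and $\overline{n}$ depending on $g$ only through $\alpha$, hence through $f$) are accurate details the paper leaves implicit.
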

\begin{proof}
Consider the function $g:[0,1] \rightarrow \mathbb{R}^{+}$, defined by $g(x)=\alpha>0$ for all $x \in [0,1]$. Since, by assumption, $f(x)=g(x)$, for every $x \in [a,b]$, and since it is easy to see that $S_{n}^{\chi}(g)(x)=\alpha$ for all $x \in [0,1]$, the conclusion follows directly from Theorem \ref{Teorema5.1}.  
\end{proof}
Note that Theorem \ref{Teorema5.1}, and hence the above local shape-preserving result, applies to the (truncated) max-product generalized sampling operators associated with all the centered bell-shaped functions listed at the end of Section \ref{Sezione2}, including the max-product neural network (NN) operators, with the exception of the central B-spline $\chi_{b}$ defined in (\ref{centralb-spline}) which is not strictly positive, being compactly supported.
\newline We point out that the above results show that max-product generalized sampling operators based on suitable kernels are particularly effective in the local reconstruction of non-smooth, strictly positive, bounded, and continuous functions. Indeed, for instance, if we consider a signal $f$ that vanishes on a subinterval $[c,d]$, we may assume that on this interval it attains an arbitrary small value $\varepsilon>0$. Hence, by Corollary \ref{Corollario5.1}, it follows that, for sufficiently large $n \in \mathbb{N}$, we have $S_{n}^{\chi}(f)(x)=\varepsilon$ for every $x \in [c,d]$. \newline These considerations suggest that the above theory may have potentially interesting applications in signal and image processing. In particular, the ability of the operators $S_{n}^{\chi}$ to locally approximate bounded, strictly positive functions with very high accuracy represents a clear advantage of the max-product generalized sampling operators over their classical (linear) counterparts, which do not exhibit this useful property. 
\newline We conclude with the following remark.
\begin{remark}
	Note that the statement of the strong localization result expressed in Theorem \ref{Teorema5.1}, as well as the consequent local shape-preserving property, requires the bounded function $f$ to be strictly positive on $[0,1]$. However, as in the case of the saturation theorem and the local inverse result, this restriction can be lifted. More precisely, let $f:[0,1]\rightarrow\mathbb{R}$ be a bounded function of variable sign on $[0,1]$, and choose a constant $c>0$ such that $f(x)+c>0$ for all $x\in [0,1]$. Then, it is sufficient to define the following max-product-type operator:
	\begin{equation*}
		L_{n}^{\chi}(f)(x):=S_{n}^{\chi}(f(\cdot)+c)(x)-c, \quad x \in [0,1], \ n \in \mathbb{N},
	\end{equation*}  
	For this operator, the following results hold:
	\begin{description}
		\item[{(i)}] Under the same assumptions on $\chi$ as in Theorem \ref{Teorema5.1}, let also $g:[0,1]\rightarrow\mathbb{R}$ be a bounded function on $[0,1]$ and suppose that there exist $a$, $b \in [0,1]$, with $0<a<b<1$, such that $f(x)=g(x)$ for all $x \in [a,b]$. Then, for all $c$, $d \in [0,1]$ satisfying $a<c<d<b$, there exists $\overline{n} \in \mathbb{N}$, depending only on $a$, $b$, $c$, $d$, $f$, $g$, and $\chi$, such that
		\begin{equation*}
			L_{n}^{\chi}(f)(x)=L_{n}^{\chi}(g)(x),
		\end{equation*}   
		for all $x\in [c,d]$ and for all $n \ge \overline{n}$. 
		\item[{(ii)}] Suppose that there exist $a$, $b \in [0,1]$, with $0<a<b<1$, such that $f(x)=\alpha\in \mathbb{R}$ for all $x \in [a,b]$, i.e., $f$ is constant on $[a,b]$. Then, for any $c$, $d \in [a,b]$ with $a<c<d<b$, there exists $\overline{n}\in \mathbb{N}$ depending only on $a$, $b$, $c$, $d$, $f$, and $\chi$, such that 
		\begin{equation*}
			L_{n}^{\chi}(f)(x)=\alpha,
		\end{equation*}   
		for all $x \in [c,d]$ and for all $n \ge \overline{n}$. 
	\end{description} 
\end{remark}

\section*{Acknowledgments}

The authors are members of the Gruppo Nazionale per l'Analisi Matematica, la Probabilit\`a e le loro Applicazioni (GNAMPA) of the Istituto Nazionale di Alta Matematica (INdAM), of the Gruppo UMI (Unione Matematica Italiana) T.A.A. (Teoria dell'Approssimazione e Applicazioni), and of the network RITA (Research ITalian network on Approximation).

\section*{Funding}
The authors have been supported within the project Fondazione Perugia: ``A.I.T.T.N. –
Algoritmi Innovativi di Imaging Digitale e Tecniche Termografiche per il Raffreddamento di Sistemi
Elettronici mediante Nanofluidi", 2026.

\section*{Conflict of interest/Competing interests}

{\small The authors declare that they have no conflict of interest and competing interest.}

\section*{Availability of data and material and Code availability}
{ \small Not applicable.}


\end{document}